\RequirePackage{luatex85}
\documentclass[11pt]{amsart}
\usepackage{etex}

\usepackage[margin=1in]{geometry}

\usepackage{amscd,latexsym,amsthm,amsfonts,amssymb,amsmath,amsxtra,mathdots,tikz}
\usepackage[mathscr]{eucal}
\usepackage{xcolor}
\usepackage[normalem]{ulem}
\usepackage{hyperref}
\usepackage[all,cmtip]{xy}
\usepackage{enumitem}
\usepackage{tabularx}
\usepackage{textcomp}
\usepackage{caption}
\usepackage{makecell}
\usepackage{pdflscape}

\renewcommand\theequation{\thesection.\arabic{equation}}

\newcommand{\BG}{{\mathbb {G}}}

\newcommand{\CM}{{\mathcal {M}}}

\newcommand{\Fe}{{\mathfrak {e}}}
\newcommand{\Ff}{{\mathfrak {f}}}
\newcommand{\Fg}{{\mathfrak {g}}}
\newcommand{\Fh}{{\mathfrak {h}}}

\newcommand{\Fl}{{\mathfrak {l}}}

\newcommand{\Ft}{{\mathfrak {t}}}
\newcommand{\Fu}{{\mathfrak {u}}}

\newcommand{\Fgl}{\mathfrak{gl}}
\newcommand{\Fsl}{\mathfrak{sl}}
\newcommand{\Fsp}{\mathfrak{sp}}
\newcommand{\Fso}{\mathfrak{so}}
\newcommand{\Fspin}{\mathfrak{spin}}

\newcommand{\GL}{{\mathrm{GL}}}

\newcommand{\GSp}{{\mathrm{GSp}}}

\newcommand{\GSO}{{\mathrm{GSO}}}

\newcommand{\Spin}{{\mathrm{Spin}}}

\newcommand{\PGL}{{\mathrm{PGL}}}

\newcommand{\SL}{{\mathrm{SL}}}

\newcommand{\HSpin}{{\mathrm{HSpin}}}

\newcommand{\SO}{{\mathrm{SO}}}

\newcommand{\Sp}{{\mathrm{Sp}}}

\def\Fu{{\mathfrak u}}

\newtheorem{thm}{Theorem}[section]

\newtheorem{prop}[thm]{Proposition}
\newtheorem {conj}[thm]{Conjecture}

\newtheorem {ques/conj}[thm]{Question/Conjecture}

\newtheorem{defn}[thm]{Definition}
\newtheorem{rmk}[thm]{Remark}

\newtheorem{lemma}[thm]{Lemma}

\makeatletter

\newcommand{\Rmnum}[1]{\expandafter\@slowromancap\romannumeral #1@}
\makeatother

\begin{document}
	\renewcommand{\theequation}{\arabic{equation}}
	\numberwithin{equation}{section}

	\title{Anomaly-free Hyperspherical Hamiltonian spaces for simple reductive groups}

	\author{Guodong Tang}
	\address{Department of Mathematics\\
		National University of Singapore, Singapore}
	\email{E1124897@u.nus.edu}

	\author{Chen Wan}
	\address{Department of Mathematics \& Computer Science\\
		Rutgers University – Newark\\
		Newark, NJ 07102, USA}
	\email{chen.wan@rutgers.edu}
	
	\author{Lei Zhang}
	\address{Department of Mathematics\\
		National University of Singapore, Singapore}
	\email{matzhlei@nus.edu.sg}
	
	\date{}
	
	\subjclass[2020]{Primary 11F67; 11F72}
	
	\keywords{relative Langlands duality, hyperspherical Hamiltonian spaces}

	\begin{abstract}
		In this paper, we provide a complete list of anomaly-free hyperspherical Hamiltonian spaces for simple reductive groups, as well as their conjectural dual spaces in the sense of BZSV duality \cite{BSV}. 
	\end{abstract}
	
	\maketitle

\section{Introduction}
Let $G$ be a split connected reductive group and $\hat{G}$ be its dual group. Following Section 3.5 of \cite{BSV}, we say that a smooth affine $G$-Hamiltonian space $\CM$ is hyperspherical if it satisfies the following three conditions:
\begin{itemize}
\item (Coisotropic condition) The field of $G$-invariant rational functions on $\CM$ is commutative with respect to the Poisson bracket.
\item The image of the moment map $\CM\rightarrow \Fg^\ast$ has a nonempty intersection with the nilcone of $\Fg^\ast$.
\item The stabilizer in $G$ of a generic point of $\CM$ is connected.
\end{itemize}

In Section 3.6 of \cite{BSV}, Ben-Zvi, Sakellaridis, and Venkatesh proved a structure theorem for those Hamiltonian space which we   recall here. We define a BZSV quadruple for $G$ to be $\Delta=(G,H,\iota,\rho_H)$ where $H$ is a split reductive subgroup of $G$;  $\rho_H$  is a symplectic representation of $H$; and  $\iota$ is a homomorphism from $\SL_2$ into $G$ whose image commutes with $H$. For a BZSV quadruple $\Delta=(G,H,\iota,\rho_H)$ of $G$, following \cite[Section 3]{BSV}, one can associate a $G$-Hamiltonian variety $\CM_\Delta$ as follows. 
Let $L$ be the centralizer of $h(t):=\iota(\begin{pmatrix}t&0\\ 0&t^{-1}\end{pmatrix})$ in $G$  and let $U=\exp(\mathfrak u)$ (resp. $\bar{U}=\exp(\bar{\mathfrak u})$) be the corresponding unipotent subgroups of $G$ associated with $\iota$, where $\mathfrak u\subset \Fg$ (resp. $\bar{\mathfrak u}\subset \Fg$) is the positive weight space (resp. negative weight space) of the Lie algebra $\Fg$ of $G$ under the adjoint action of $h(t)$. Then $P=LU$ and $\bar{P}=L\bar{U}$ are parabolic subgroups of $G$ that are opposite to each other. Since $H$ commutes with the image of $\iota$, we have $H\subset L$.

Let $\mathfrak u^+$ be the $\geq 2$ weight space under the adjoint action of $h(t)$. It is well known that the vector space $\mathfrak u/\mathfrak u^+$ has a symplectic structure and realizes a symplectic representation of $H$ under the adjoint action. If we denote by $V=V_{\rho_H}$ the underlying vector space of the symplectic representation $\rho_H$, then the Hamiltonian variety $\CM_\Delta$ is defined as
\begin{equation}\label{HV}
 \CM_\Delta=((V\times \mathfrak{u}/\mathfrak{u}^+)\times _{(\Fh+\mathfrak{u})^\ast} \Fg^\ast)\times ^{HU}G   
\end{equation}
with the following structures:
\begin{itemize}
\item the maps $\mathfrak{u}/\mathfrak{u}^+\rightarrow \Fh^\ast$ and $V\rightarrow \Fh^\ast$ are the moment maps;
\item the map $V\rightarrow \mathfrak{u}^\ast$ is the zero map;
\item the map $\mu:\mathfrak{u}/\mathfrak{u}^+\rightarrow \mathfrak{u}^\ast$ is given by
$$\mu(u)=\kappa_1(u)+\kappa_f$$
where $\kappa_1: \mathfrak{u}/\mathfrak{u}^+\rightarrow (\mathfrak{u}/\mathfrak{u}^+)^\ast$ is the isomorphism via the symplectic form on $\mathfrak{u}/\mathfrak{u}^+$ and 
$$\kappa_f(X)=(f,X),\;X\in \mathfrak{u},\;f=\iota(\begin{pmatrix} 0&0\\1&0\end{pmatrix}).$$
\end{itemize}

Theorem 3.6.1 of \cite{BSV} states that any hyperspherical $G$-Hamiltonian space is of the form $\CM_\Delta$ associated with a unique BZSV quadruple $\Delta=(G,H,\iota,\rho_H)$ of $G$.

\begin{defn}
For a BZSV quadruple $\Delta=(G,H,\iota,\rho_H)$, we use $\rho_{\iota}$ to denote the symplectic representation $\Fu/\Fu^+$ of $H$ and let $\rho_{H,\iota}=\rho_H\oplus \rho_{\iota}$. We say the quadruple is hyperspherical if the associated Hamiltonian space $\CM_\Delta$ is hyperspherical. 
\end{defn}

The following proposition has been proved in Section 3.6 of \cite{BSV}.
\begin{prop}\label{hyperspherical prop}
A BZSV quadruple $\Delta=(G,H,\iota,\rho_H)$ is hyperspherical if it satisfies the following conditions:
\begin{itemize}
\item \label{item:Condition-1} $H$ is a spherical subgroup of $L$.
\item \label{item:Condition-2} The restriction of the symplectic representation $\rho_{H,\iota}$ to $H'$ is a multiplicity-free symplectic representation (completely classified by Knop \cite{K} and Losev \cite{Lo}) where $H'$ is the stabilizer (in $H$) of a generic point of $\Fh^\perp$. Here $\Fh^\perp$ is the orthogonal complement of $\Fh$ in $\Fl$ under the Killing form and $H$ acts on it by the conjugation action.
\item The stabilizer (in $G$) of a generic point of $\CM_\Delta$ is connected.
\end{itemize}
\end{prop}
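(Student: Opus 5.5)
Since $\CM_\Delta$ is smooth and affine by construction, and connectedness of generic stabilizers is exactly the third hypothesis, the plan is to check only the coisotropic condition and the nilcone condition. The nilcone condition is the easier one. Take the point of \eqref{HV} with $v=0\in V$ and $u=0\in \Fu/\Fu^+$. Its image in $(\Fh+\Fu)^\ast$ is $(0,\kappa_f)$. We need an element of $\Fg^\ast$ restricting to this, and the element $f$ itself (identified with $\Fg^\ast$ via the Killing form) does the job: $(f,\Fh)=0$ because $f$ has $h(t)$-weight $-2$ while $\Fh\subset\Fl$ has weight $0$. So $f$ lies in the image of the moment map, and $f$ is nilpotent.

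For the coisotropic condition, I would use the standard criterion: a Hamiltonian $G$-space is coisotropic if and only if a generic $G$-orbit is coisotropic, equivalently if and only if the moment map separates generic orbits, i.e.\ $\mathbb C(\CM)^G$ is Poisson commutative. The approach is to reduce step by step.

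\textbf{Step 1 (Whittaker induction).} Realize $\CM_\Delta$ as a Hamiltonian reduction of $T^\ast G\times V\times(\Fu/\Fu^+)$ by $HU$ at the character $\kappa_f$. This identifies generic $G$-invariants on $\CM_\Delta$ with generic $HU$-invariants on the slice $(f+\Fu^{\perp,\ast}+\dots)\times V\times \Fu/\Fu^+$. By the Slodowy-type transversality of $f+\Fg^e$, where $\Fg^e\subset \Fl+\bar\Fu$, the $U$-action can be absorbed. The result is a Hamiltonian $L$-space of the form $T^\ast(L/H)$ twisted by the symplectic $H$-representation $\rho_{H,\iota}$, in the spirit of the Losev/Knop cotangent-type model $L\times^H(\Fh^\perp\times V\times \Fu/\Fu^+)$.

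\textbf{Step 2 (reduction to the generic stabilizer).} For an $L$-space of the form $L\times^H(\Fh^\perp\oplus W)$ with $W=\rho_{H,\iota}$, Knop's criterion says that coisotropy is equivalent to two things. First, $L/H$ is spherical, which handles the $\Fh^\perp$ directions because $T^\ast(L/H)$ is then multiplicity-free. Second, $W$ is multiplicity-free as a symplectic representation of the stabilizer $H'$ of a generic point $\xi\in\Fh^\perp$. To prove this, restrict to the slice $\xi+\Fh^\perp$ near a generic $\xi$. The $H$-orbit of $\xi$ accounts for the directions $\Fh/\Fh'$. The remaining invariants are then $H'$-invariants on $W$ together with functions of $\xi$ modulo $H$, and sphericity makes the latter Poisson commutative.

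The main obstacle I expect is Step 1. There one has to carefully check that, after the $U$-reduction, the extra $\Fu/\Fu^+$ factor and the $\Fu^+$ directions produce exactly the symplectic representation $\rho_\iota$ of $H$, with no residual twist of the Poisson structure. I would first try to verify this on the linearized model at the point $f$ using the $\mathfrak{sl}_2$-decomposition of $\Fg$, and then argue that genericity propagates.
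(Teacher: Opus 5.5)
The paper gives no argument of its own for this statement; it quotes it from \cite{BSV}, Section 3.6. Your outline therefore has to stand on its own.

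The easy parts are fine. The point $(0,0,f)$ of the fibre product puts the nilpotent $f$ in the image of the moment map, since $(f,\Fh)=0$ by weights. Connectedness of generic stabilizers is a hypothesis. Step 2 is acceptable as a citation of Knop--Losev. Two small corrections:
\begin{itemize}
\item Affineness is not ``by construction''. The group $HU$ is not reductive, so $G/HU$ is only quasi-affine. Affineness follows once $\CM_\Delta$ is rewritten as $G\times^H(\text{affine})$ via the slice.
\item $\Fg^e$ consists of highest-weight vectors, so it lies in $\mathfrak{p}=\Fl+\Fu$, not in $\Fl+\bar\Fu$. Otherwise $f+\Fg^e$ would not lie in $f+\mathfrak{p}$, the locus cut out by $\xi|_{\Fu}=\kappa_f$.
\end{itemize}

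The genuine gap is Step 1, exactly where you expect trouble, and your claimed resolution (``no residual twist'') is false. Write $\Fg_k$ for the $t^k$-eigenspace of $\mathrm{Ad}(h(t))$.

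\textbf{What the $U$-reduction actually produces.} Computing $\mathbb{C}(\CM_\Delta)^G$ means reducing $\Fg^\ast\times V\times\Fu/\Fu^+$ by $HU$ at $\kappa_f$. Reduce by $U$ first: the Heisenberg part moves the $\Fu/\Fu^+$-coordinate to $0$, and $\exp(\Fu^+)$ is absorbed by the slice. What remains is $V\times(f+\Fg^e)$, carrying the \emph{transverse} (Gan--Ginzburg) Poisson structure. Only $G_\iota\supset H$ still acts; there is no $L$-action.

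\textbf{Why the structures differ.} Since $\Fg^e\cong\Fg_0\oplus\Fg_1$ as $G_\iota$-modules, this space is $H$-equivariantly isomorphic, as a variety, to the subvariety of $V\times\Fu/\Fu^+\times\Fl^\ast$ whose $H$-invariants compute $\mathbb{C}(\CM_{\Delta_{red}})^L$. So the two invariant fields agree as fields, but not as Poisson fields.
\begin{itemize}
\item In $\CM_\Delta$, $\rho_\iota$ reappears as the odd-weight part of $\Fg^e$. Its coordinates have non-constant brackets. The linear part is the Lie--Poisson bracket of $\Fg^f$, and $[\Fg^f_{-1},\Fg^f_{-1}]\subset\Fg^f_{-2}$ pairs with the $\Fg^e_2$-coordinate. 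For $(\Sp_4,\SL_2,(2,1^2),std)$ this term is visibly nonzero.
\item In $\CM_{\Delta_{red}}$, the bracket on $\rho_\iota$ is the constant form $\langle f,[\cdot,\cdot]\rangle$.
\end{itemize}
Consequently, coisotropy of $L\times^H(\Fh^\perp\times\rho_{H,\iota})$ does not transfer to $\CM_\Delta$ through the identification you propose.

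\textbf{Why the linearization check does not help.} At $f$ you see the Lie--Poisson structure of $\Fg^f$, which again is not $\Fl^\ast\times\rho_\iota$. Moreover, the Poisson rank of a fixed invariant field can only drop under degeneration. So commutativity of a degenerate model does not propagate to the generic one.

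\textbf{What is missing.} You need a lemma that Whittaker induction preserves coisotropy, proved with a criterion blind to the Poisson structure. The criterion you quote at the start works, in the form: $M$ is coisotropic iff $\mathrm{tr.deg}\,\mathbb{C}(M)^G=\mathrm{tr.deg}\,\mathbb{C}(\overline{\mu(M)})^G$ (equivalently, $\dim M=\dim\Fg x+\dim\Fg_{\mu(x)}x$ for generic $x$). The slice identification makes the left-hand sides equal for $\CM_\Delta$ and $\CM_{\Delta_{red}}$. You would still have to show that the defect of $\CM_\Delta$ is at least that of $\CM_{\Delta_{red}}$, or else invoke the corresponding result in \cite{BSV}.
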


\begin{defn}
We say a BZSV quadruple $\Delta=(G,H,\iota,\rho_H)$ is anomaly-free if the symplectic representation $\rho_{H,\iota}$ is an anomaly-free symplectic representation of $H$ (defined in Definition 5.1.2 of \cite{BSV}). We say a hyperspherical Hamiltonian space $\CM=\CM_{\Delta}$ is anomaly-free if $\Delta$ is anomaly-free.
\end{defn}

In \cite{BSV}, Ben-Zvi--Sakellaridis--Venkatesh proposed a conjectural duality between the set of anomaly-free hyperspherical $G$-Hamiltonian spaces and the set of  anomaly-free hyperspherical $\hat{G}$-Hamiltonian spaces, or equivalently, a conjectural duality between the set of anomaly-free hyperspherical BZSV quadruples of $G$ and the set of anomaly-free hyperspherical BZSV quadruples of $\hat{G}$. 
This proposed duality not only extends the classical Langlands program to a broader geometric setting but also provides a new perspective on the interaction between Hamiltonian symmetries and representation theory. 
They also formulated a series of elegant and far-reaching conjectures that should hold within this framework. 

An important aspect of their conjecture concerns period integrals. Roughly speaking, their period conjecture states that the period integral associated with $\Delta$ should be equal to the L-function  associated with $\hat{\Delta}$, and vice versa. Here $\Delta$ and $\hat{\Delta}$ are any two anomaly-free hyperspherical BZSV quadruples that are dual to each other. We refer the reader to Section 1 of \cite{MWZ1} for a detailed statement of the period integral conjecture. The period integral conjecture not only gives a conceptual explanation for many established automorphic integrals, but also introduces many new classes of automorphic integrals for studying.

Despite its conceptual beauty, a major challenge in BZSV duality is the lack of a general algorithm to compute the duality.  In other words, for a given anomaly-free hyperspherical BZSV quadruple $\Delta=(G,H,\rho_H,\iota)$, there is currently no known systematic procedure to determine its dual $\hat{\Delta}$. This remains a fundamental open problem. There is also no general classification of anomaly-free hyperspherical BZSV quadruples for general reductive groups.
	
In Section 4 of \cite{BSV}, the authors devised an algorithm to compute the dual in a special case known as the polarized case, which is when the symplectic representation $\rho_{H,\iota}$ of $H$ is of the form $\rho_{H,\iota}=T(\tau):=\tau\oplus \tau^\vee$ for some representation $\tau$ of $H$. In particular, this includes the cases where $\Delta=(G,H,0,1)$ (i.e. the spherical variety case). In a paper by Mao, the second and third authors \cite{MWZ2}, they provide an algorithm to compute the dual in the vector space case, i.e., the case where $\Delta=(G,G,\rho,1)$.

In this paper, we study the case where $G$ is a simple reductive group. We provide a complete classification of anomaly-free hyperspherical BZSV quadruples for any simple reductive group, along with their dual quadruples. Our findings are summarized in the tables provided at the end of the paper. As in \cite{MWZ2}, by examining the period integral conjecture for these quadruples, one can recover numerous previously studied Rankin-Selberg and period integrals, thereby providing a new conceptual framework for understanding them. Furthermore, this work also introduces several new period integrals for future study. We do not explore these applications further within the scope of this paper.

\subsection{Organization of the paper}
In Section \ref{sec:strategy}, we explain our strategy for classifying the quadruple and computing dual quadruples, using Type $B_2=C_2$ and $G_2$ as illustrative examples. Section \ref{sec:type-A} and Section \ref{sec:typeE8} treat cases of Type $A$ and Type $E_8$, respectively.

The arguments for the remaining types follow a similar logic: the classical cases (Types $B, C, D$) are analogous to Type $A$, while the exceptional cases (Types $F_4, E_6, E_7$) parallel the treatment of Type $E_8$. Consequently, we omit the explicit details for these instances and instead summarize the results in the tables provided in Section \ref{sec:tables}. We refer the reader to the PhD thesis of the first author for the details of those cases.

\subsection{Acknowledgement}
The second author’s work is partially supported by the NSF grant DMS-2349836 and a Simons Travel Grant. The work of the third author is partially supported by AcRF Tier 1 grants A-0004274-00-00, A-0004279-00-00, and A-8002960-00-00 of the National University of Singapore.

\section{Our strategy} \label{sec:strategy}
In this section we will explain our strategy to classify the quadruples and to compute the dual. We will use Type $B_2=C_2$ and $G_2$ as examples to explain our strategy. In Section \ref{sec:strategy-classification}, we will explain our strategy for classifying the quadruples. In Section \ref{sec:strategy-dual}, we will explain our strategy for computing the dual.

\subsection{Strategy for classifying the quadruples} \label{sec:strategy-classification}
Let $G$ be a split connected reductive group. We will explain our strategy for classifying anomaly-free hyperspherical BZSV quadruples of $G$. The first step is to identify all the $\SL_2$-homomorphisms $\iota:\SL_2\rightarrow G$ (or equivalently, all the nilpotent orbits) that can appear in a quadruple. For any given $\iota$,   let $L,U,\bar{U},\Fu,\Fu^+$ be as in the introduction and let $G_\iota\subset L$ be the centralizer of $Im(\iota)$ in $G$. 
If $\iota$ is part of a hyperspherical BZSV quadruple $\Delta=(G,H,\iota,\rho_H)$ of $G$, then $H\subset G_\iota$. By Proposition \ref{hyperspherical prop}, we must have
\begin{enumerate}
\item\label{condition 1} $G_\iota$ is a spherical subgroup of $L$. 
\item\label{condition 2} $\Fu/\Fu^+$ is a multiplicity-free symplectic representation of $G_\iota'$ where $G_\iota'$ is the stabilizer (in $G_\iota$) of a generic point of $\Fg_{\iota}^{\perp}$. Here $\Fg_{\iota}^{\perp}$ is the orthogonal complement of $\Fg_\iota$ in $\Fl$.
\end{enumerate}
We use $Nil_{0}(\Fg)$ to denote the subset of all nilpotent orbits that satisfies the two conditions above. 
We refer the reader to \cite{BP} (resp. \cite{K}) for a complete list of spherical reductive subgroups (resp. multiplicity-free symplectic representations). When $G$ is a classical group, Gan and Wang provide a classification of all the even nilpotent orbits $\iota$ \footnote{here we say $\iota$ is even if $\Fu=\Fu^+$, in particular Condition \eqref{condition 2}  is trivial if $\iota$ is even} that satisfy the two conditions above. 
They also give a necessary condition for general $\iota$ in the classical group case.

Throughout this paper, for nilpotent orbits, $1$ always stands for the trivial nilpotent orbit. In the classical groups case, we will use partitions to denote the nilpotent orbits. In the exceptional group case, we will the Levi subgroup to denote the nilpotent orbit (i.e. it denotes the nilpotent orbit that is principal in the Levi), and we will see that all the nilpotent orbits in $Nil_0(\Fg)$ are principal in a Levi.

The next step is to determine all anomaly-free hyperspherical BZSV quadruples $\Delta=(G,H,\iota,\rho_H)$ that contain a given $\iota\in Nil_0(\Fg)$. 
We fix such an $\iota$, which gives us $L,G_\iota\subset L,\Fu/\Fu^+$. 
Then the group $H$ in the quadruple must be a spherical subgroup of $G$ contained in $G_\iota$ such that $\Fu/\Fu^+$ is a multiplicity-free symplectic representation of $H'$ where $H'$ is the stabilizer (in $H$) of a generic point of $\Fh^{\perp}$ and $\Fh^{\perp}$ is the orthogonal complement of $\Fh$ in $\Fl$. 
One can systematically examine all the spherical subgroups of $L$ (listed in \cite{BP}) that satisfy these conditions. 

The final datum to be determined is the symplectic representation $\rho_H$ of $H$. For this, we just need to go over all multiplicity-free symplectic representations of $H$ (listed in \cite{K}) such that the restriction of $\rho_H\oplus (\Fu/\Fu^+)$ to $H'$ is still multiplicity-free. This is how we write down all the BZSV quadruples that satisfy the first two conditions in Proposition \ref{hyperspherical prop}. Lastly, we just need to make sure the representation $\rho_H\oplus (\Fu/\Fu^+)$ of $H$ is anomaly-free and the stabilizer of a generic point of $\CM_\Delta$ in $G$ is connected. This completes the classification.

\begin{rmk}\label{rmk isogeny}
For a BZSV quadruple $\Delta=(G,H,\iota,\rho_H)$, there exist various other quadruples that are essentially equivalent to $\Delta$ up to a central or finite isogeny. For instance, the quadruples $(\GL_2,\SL_2,1,T(std))$ and $(\SL_2,\SL_2,1,T(std))$  are fundamentally the same. In this paper, we provide only one representative for each such family (i.e., one for each root type).
\end{rmk}

For the rest of this subsection, we will discuss the cases of Type $B_2=C_2$ and $G_2$ as examples. We start with the $B_2=C_2$ case. In this case, $G=\Sp_4$ and $Nil(\Fg)=\{(4),(2^2),(2,1^2),(1^4)\}$ contains 4 elements. It is clear that $(4)$ and $(1^4)$ belong to the set $Nil_0(\Fg)$ (in fact, it is clear that the trivial orbit and the regular orbit belong to $Nil_0(\Fg)$ for any $G$). For $(2^2)$, $L=\GL_2$, $G_\iota=O(2)$ and $\Fu/\Fu^+=\{0\}$. For $(2,1^2)$, $L=\GL_1\times \Sp_2$, $G_\iota=\Sp(2)$ and $\Fu/\Fu^+$ is the standard representation of $G_\iota$. It is easy to see that both orbits satisfy Conditions \eqref{condition 1}  and \ref{condition 2}  above. Hence $Nil_0(\Fg)=Nil(\Fg)$. Now we will determine all the possible $H$ and $\rho_H$ for each $\iota$.

\begin{itemize}
\item When $\iota$ is the trivial orbit, $H$ can be all the possible spherical subgroups of $\Sp_4$, which are $\Sp_4,\Sp_2\times \Sp_2,\GL_1\times \Sp_2$ or $\GL_2$. 
\begin{itemize}
    \item When $H=\Sp_4$, $\rho_H$ can be all the anomaly-free multiplicity-free symplectic representations of $H$, which are $0,T(std)$ or $T(std\oplus std)$.
    \item When $H=\GL_1\times \Sp_2$, the generic stabilizer is trivial and hence $\rho_H$ must be $0$.
    \item When $H=\Sp_2\times \Sp_2$, the generic stabilizer is $H'=\Sp_2$. It is easy to see that the only possible $\rho_H$ are $0,T(std_{\SL_2}), T(std_{\SL_2,1}\oplus std_{\SL_2,2})$ or $T(std_{\SL_2,1}\oplus std_{\SL_2,1})$. Here $std_{\SL_2,i}$ means the standard representation of the $i$-th copy of $\SL_2$ in $H$.
    \item When $H=\GL_2$, $H'$ is a finite group, which forces $\rho_H$ to be zero. Then it is easy to see that the generic stabilizer on the Hamiltonian space is not connected.
\end{itemize}
\item When $\iota=(2,1^2)$, $L=\GL_1\times \Sp_2$, $G_\iota=\Sp_2$ and $\Fu/\Fu^+$ is the standard representation of $\Sp_2=\SL_2$. The only possible $H$ are $G_\iota$ or $\GL_1$. If $H=\GL_1$, $H'$ would be trivial; 
this is not hyperspherical since $\Fu/\Fu^+$ is non-zero. Hence $H$ must be $G_\iota$. The generic stabilizer of $H$ in $L$ is $\GL_1$ and hence the only possible $\rho_H$ is $std_{\SL_2}$ (note that we cannot let $\rho_H=0$ because we need $\rho_{H,\iota}$ to be anomaly-free).
\item When $\iota=(2^2)$, $L=\GL_2$ is the Siegel Levi of $G$, $G_\iota=\SO(2)=\GL_1$ and $\Fu/\Fu^+=0$. In this case, the only possible $H$ is $G_\iota$ and $\rho_H$ must be 0. This gives the quadruple $(\Sp_4,\GL_1,(2^2),0)$.
\item When $\iota$ is the regular orbit, it is clear that $H$ can only be $1$ and $\rho_H$ must be $0$ \footnote{this applies to the regular nilpotent orbit of any group}. This gives the quadruple $(\Sp_4,1,(4),0)$.
\end{itemize}

In summary, we get the following table of anomaly-free hyperspherical BZSV quadruples for Type $B_2=C_2$. For each of them, we list $G, H,\iota, \rho_H,L,\Fu/\Fu^+,H'$ and the restriction of $\rho_{H,\iota}$ to $H'$.

\newpage

\begin{figure}[h!]
	\begin{tabular}{| c | c | c | c | c | c |}
		\hline
		\textnumero & $\Delta=(G,H,\iota,\rho_H)$ & $L$ & $\Fu/\Fu^+$ & $H'$ & $\rho_{H,\iota}|_{H'}$  \\
		\hline
		1 &  $(\Sp_4,\Sp_4,1,0)$ & $\Sp_4$ & $0$ & $\Sp_4$ & $0$   \\
		\hline
        2 & $(\Sp_4,\Sp_4,1,T(std_{\Sp_4}))$ & $\Sp_4$ & $0$ & $\Sp_4$ & $T(std_{\Sp_4})$  \\
        \hline
        3 & $(\Sp_4,\Sp_4,1,T(std_{\Sp_4}\oplus std_{\Sp_4}))$ & $\Sp_4$ & $0$ & $\Sp_4$ & $T(std_{\Sp_4}\oplus std_{\Sp_4}))$  \\
        \hline
        4 & $(\Sp_4,\GL_1\times \Sp_2,1,0)$ & $\Sp_4$ & $0$ & $1$ & $0$ \\
        \hline
        5 & $(\Sp_4,\Sp_2\times \Sp_2,1,0)$ & $\Sp_4$ & $0$ & $\Sp_2$ & $0$ \\
        \hline
        6 & $(\Sp_4,\Sp_2\times \Sp_2,1,T(std_{\SL_2}))$ & $\Sp_4$ & $0$ & $\Sp_2$ & $T(std_{\SL_2}))$ \\
        \hline
        7 & $(\Sp_4,\Sp_2\times \Sp_2,1,T(std_{\SL_2,1}\oplus std_{\SL_2,2}))$ & $\Sp_4$ & $0$ & $\Sp_2$ & $T(std_{\SL_2}\oplus std_{\SL_2})$ \\
        \hline
        8 & $(\Sp_4,\Sp_2\times \Sp_2,1,T(std_{\SL_2,2}\oplus std_{\SL_2,2}))$ & $\Sp_4$ & $0$ & $\Sp_2$ & $T(std_{\SL_2}\oplus std_{\SL_2})$ \\
        \hline
        9 & $(\Sp_4,1,(4),0)$ & $\GL_{1}^2$ & $0$ & $1$ & $0$ \\
        \hline
        10 & $(\Sp_4,\GL_1,(2^2),0)$ & $\GL_2$ & $0$ & $1$ & $0$ \\
        \hline
        11 & $(\Sp_4,\SL_2,(2,1^2),std_{\SL_2})$ & $\GL_2$ & $std_{\SL_2}$ & $\SL_2$ & $T(std_{\SL_2})$ \\
        \hline
	\end{tabular}
	\captionof{table}{Type $B_2=C_2$}
    \label{Table B_2 1}
\end{figure}

For the $G_2$-case, $G=G_2$ and $Nil(\Fg)$ contain 5 elements: the regular orbit, the subregular orbit, the orbit associated with the simple root $\alpha$ (resp. $\beta$), and the trivial orbit. Hence $\alpha$ is the long root and $\beta$ is the short root. It is clear that the trivial orbit and the regular orbit belong to $Nil_0(\Fg)$. For the subregular orbit, $L=\GL_2$ and $H=1$ which is not a spherical subgroup. Hence it does not belong to $Nil_0(\Fg)$. For the orbit associated with $\alpha$ (resp. $\beta$), $L=\GL_2$, $H=\SL_2$ and $\Fu/\Fu^+$ is the symmetric cube (resp. standard representation) of $\SL_2$. It is clear that both orbits satisfy Conditions \eqref{condition 1}  and \eqref{condition 2}  above. Hence $Nil_0(\Fg)$ contains 4 orbits: the trivial orbit, the regular orbit, and the two orbits associated with two simple roots. Now we will determine all the possible $H$ and $\rho_H$ for each $\iota$.

\begin{itemize}
\item When $\iota$ is the trivial orbit, $H$ can be all the possible spherical subgroups of $G_2$, which are $G_2,\SL_3$ or $\SL_2\times \SL_2$. 
\begin{itemize}
    \item When $H=G_2$, $\rho_H$ can be all the anomaly-free multiplicity-free symplectic representations of $H$, which are $0$ and $T(std)$. If $\rho_H=T(std)$ the generic stabilizer is not connected; hence, it has to be $0$.
    \item When $H=\SL_3$, the generic stabilizer $H'$ is $\SL_2$. Then the only possible $\rho_H$ is just $0$.
    \item When $H=\SL_2\times \SL_2$, $H'$ is a finite group, which forces $\rho_H$ to be zero. Then it is easy to see that the generic stabilizer on the Hamiltonian space is not connected.
\end{itemize}
\item When $\iota$ is the orbit associated with $\alpha$, $L=\GL_2$, $H=H'=\SL_2$ and $\Fu/\Fu^+$ is the symmetric cube representation of $H$. Then $\rho_H$ must be $0$. It is easy to see that the generic stabilizer on the Hamiltonian space is not connected in this case.
\item  When $\iota$ is the orbit associated with $\alpha$, $L=\GL_2$, $H=H'=\SL_2$ and $\Fu/\Fu^+$ is the standard representation of $H$. In this case the only possible $\rho_H$ is the standard representation.
\item When $\iota$ is the regular orbit, it is clear that $H$ can only be $1$ and $\rho_H$ must be $0$.
\end{itemize}

In summary, we get the following table of anomaly-free hyperspherical BZSV quadruples for Type $G_2$.

\begin{figure}[h!]
	\begin{tabular}{| c | c | c | c | c | c |}
		\hline
		\textnumero & $\Delta=(G,H,\iota,\rho_H)$ &  $L$ & $\Fu/\Fu^+$ & $H'$ & $\rho_{H,\iota}|_{H'}$  \\
		\hline
		1 &  $(G_2,G_2,1,0)$ & $G_2$ & $0$ & $G_2$ & $0$   \\
		\hline
        2 & $(G_2,\SL_3,1,0)$ & $G_2$ & $0$ & $\SL_2$ & $0$ \\
       \hline 
       3& $(G_2, \SL_2, \SL_2, std)$ & $\GL_2$ & $std_{\SL_2}$ & $\SL_2$ & $T(std_{\SL_2})$ \\
       \hline
       4 & $(G_2,1,G_2,0)$ & $\GL_{1}^{2}$ & $0$ & $1$ & $0$ \\
       \hline
	\end{tabular}
	\captionof{table}{Type $G_2$}
    \label{Table G_2 1}
\end{figure}

\subsection{Strategy for computing the dual} \label{sec:strategy-dual}
In this subsection we will explain our strategy to compute the dual of a quadruple $\Delta=(G,H,\iota,\rho_H)$. For each $\Delta$, we have defined $L,H,\rho_{H,\iota}$ in the introduction. We set $\Delta_{red}=(L,H,1,\rho_{H,\iota})$ and we say $\Delta$ is reductive if $\iota=1$ ($\iff \Delta=\Delta_{red}$).

In Section 4.2.2 of \cite{BSV}, Ben-Zvi, Sakellaridis, and Venkatesh conjectured a relation between the dual of $\Delta$ and $\Delta_{red}$. To state their conjecture, we first need a definition.
		
\begin{defn}
Let $L$ be a Levi subgroup of $G$ and $\rho$ be an irreducible representation of $L$ with the highest weight $\varpi_L$. There exists a Weyl element $w$ of $G$ such that $w\varpi_L$ is a dominant weight of $G$ \footnote{the choice of $w$ is not unique but $w\varpi_L$ is uniquely determined by $\varpi_L$}. We define $(\rho)_{L}^{G}$ to be the irreducible representation of $G$ whose highest weight is $w\varpi_L$. In general, if $\rho=\oplus_i\rho_i$ is a finite-dimensional representation of $L$ with $\rho_i$ irreducible, we define
$$(\rho)_{L}^{G}=\oplus_i(\rho_i)_{L}^{G}.$$
\end{defn}
		
Now we are ready to state the conjecture.

\begin{conj}(Ben-Zvi--Sakellaridis--Venkatesh, Section 4.2.2 of \cite{BSV})\label{Whittaker induction}
With the notation above. If the dual of $\Delta_{red}$ is given by $\hat{\Delta}_{red}=(\hat{L}, \hat{H}_L',\hat{\iota}', \rho')$, then the dual of $\Delta$ is given by
$$(\hat{G},\hat{H}',\hat{\iota}',(\rho')_{\hat{H}_L'}^{\hat{H}'})$$
where $\hat{H}_L'$ is a Levi subgroup of $\hat{H}'$ and $\hat{H}'$ is generated by $\hat{H}_L'$ and $\{Im(\iota_\alpha)|\;\alpha\in \Delta_{\hat{G}}-\Delta_{\hat{M}}\}$. Here $\Delta_{\hat{G}}$ (resp. $\Delta_{\hat{M}}$) is the set of simple roots of $\hat{G}$ (resp. $\hat{L}$) and $\iota_\alpha:\SL_2\rightarrow \hat{G}$ is the embedding associated with $\alpha$.
\end{conj}

Assuming the above conjecture, it suffices to develop an algorithm for the duality in the reductive case, i.e. when $\Delta=(G,H,1,\rho_H)$. Within this setting, two particularly important special cases arise:
\begin{itemize}
	\item the case when $\rho_H=T(\tau)$ for some representation $\tau$ of $H$ (i.e. the spherical variety case/the polarized case);
	\item the case when $H=G$ (i.e. the symplectic vector space case).
\end{itemize}
In the first case, Ben-Zvi, Sakellaridis, and Venkatesh give an argument to compute the dual in \cite{BSV}. More specifically, in this case, if we use $\hat{\Delta}=(\hat{G},\hat{H}',\hat{\iota}',\rho_{\hat{H}'})$ to denote the dual quadruple, then $\hat{H}'$ and $\hat{\iota}'$ in the dual quadruple are given by the dual group of spherical varieties (defined in \cite{GN}, \cite{SV}, \cite{KS}), while $\rho_{\hat{H}'}$ is given by the theory of colors of spherical variety in \cite{S1}. The dual of many of the polarized cases has already been computed in \cite{KS} and \cite{S1} (see the tables at the end of those papers). In the second case (i.e. the vector space case), Mao, Wan, and Zhang have provided an algorithm to compute the dual quadruple in \cite{MWZ2}. Their results are summarized in the tables at the end of the paper.

Moreover, one can also define the nilpotent orbit $\hat{\iota}'$ in the dual quadruple for general case (which is compatible with the definition of the two special cases above) \footnote{although a general definition of the other two data in $\hat{\Delta}$ is still not available at this moment}. In fact, if we let $G'$ be the generic stabilizer of $G$ acting on $\CM_\Delta$, then there exists a Levi subgroup $L_\Delta$ of $G$ such that $[L_{\Delta},L_{\Delta}]\subset G'\subset L_\Delta$ and one can just let $\hat{\iota}'$  be the principal nilpotent orbit in $\hat{L}_{\Delta}$ where $\hat{L}_{\Delta}$ is the dual Levi subgroup of $L_{\Delta}$. In practice, the Levi subgroup $L_{\Delta}$ is easy to compute: for given $\Delta=(G,H,1,\rho_H)$, recall that we have let $H'$ be the generic stabilizer of $H$ acting on $\Fh^{\perp}$. From the structure theorem of spherical varieties, we know that there exists a Levi subgroup $L(X)$ of $G$ (here $X=G/H$) such that  $[L_{X},L_{X}]\subset H'\subset L_X$. Hence each Levi subgroup of $H'$ also corresponds to a Levi subgroup of $G$. Then $L_{\Delta}$ is given by the Levi subgroup 
of $H'$ associated with the multiplicity-free symplectic representation $\rho_H|_{H'}$ given in \cite{K} (which is also defined using the generic stabilizer).

Now we explain our strategy to compute the dual for $\Delta=(G,H,\iota,\rho_H)$. If $\Delta_{red}$ is polarized or a vector space, we can use the previous works of \cite{BSV} and \cite{MWZ2} to compute the dual of $\Delta_{red}$ and then use Conjecture \ref{Whittaker induction} to compute the dual of $\Delta$. This applies to most of the cases considered in this paper (including all the quadruples in Type $B_2=C_2$ and $G_2$ considered below).

\begin{rmk}
In a few cases that only appear in Type D (i.e. Type $\ast$ in Table \ref{Table D}), although $\Delta_{red}$ is not polarized or a vector space, we can still use some other arguments (via period integral conjecture and Conjecture \ref{Whittaker induction}) to compute its dual and hence the dual of $\Delta$. We refer the reader to the last part of Section 5 for details.
\end{rmk}

If $\Delta_{red}$ is not polarized or a vector space, there are two cases. In the first case, there exists a quadruple $\Delta'$ of $\hat{G}$ such that $(\Delta')_{red}$ is polarized or a vector space (in particular we can use the algorithm above to compute the dual of $\Delta'$) and the dual of $\Delta'$ is equal to $\Delta$. Then we know that the dual of $\Delta$ is just $\Delta'$. One example of such would be Quadruple 20 in Table \ref{Table A_{2n}}.

The second case occurs when no quadruple $\Delta'$ satisfies the conditions mentioned above. In such instances, we lack an algorithm to determine the dual quadruple.  Fortunately, when $G$ is simple, only one quadruple falls into this category: $\Delta=(\GL_8,\GL_6\times \GL_2,1,\wedge_{\GL_6}^3)$. Given that this is the sole remaining case, we conjecture it to be self-dual, consistent with the spirit of the duality. 

\begin{rmk}
It should be noted that when $G$ is not simple, additional examples of this type arise. In particular, the current algorithm—relying on the polarized case, the vector space case, and Conjecture \ref{Whittaker induction} for Whittaker induction—remains insufficient for computing the duality in full generality.
\end{rmk}

To end this section, we will use Types $B_2$ and $G_2$ as examples for our algorithm. We start with the $B_2$ case. As classified in the previous subsection (Table \ref{Table B_2 1}), there are 11 quadruples. All the 8 reductive cases (Quadruple 1-8 in the table) are polarized (Cases 1-3 are also vector space cases) and one can easily compute the dual quadruple. More specifically,  Quadruple $1$ is dual to $9$, $2$ is dual to $10$, $3$ is dual to $7$, $4$ is dual to $6$, $5$ is dual to $11$ and $8$ is self dual. This also gives us the dual for the remaining 3 non-reductive quadruples in the table as they are dual to $1,2,5$ respectively. Although we can also use Conjecture \ref{Whittaker induction} to compute the dual for these three quadruples (since $\Delta_{red}$ is polarized in these cases). For example, for Quadruple $10$, $\Delta_{red}=(\GL_2,\GL_1,1,0)$ whose dual is $(\GL_2,\GL_2,1,T(std))$. Then by Conjecture \ref{Whittaker induction} we know that the dual of $\Delta$ is Quadruple $2$ in the table. The arguments for the remaining two cases are similar. In this case, the result is summarized in Table \ref{Table B_2 2}.

Next, we consider the $G_2$ case. There are 4 quadruples as listed in Table \ref{Table G_2 1}. The first two are reductive and polarized, and one can easily compute the dual quadruple. We will have $1$ dual to $4$ and $2$ dual to $3$. This also gives us the dual for the remaining 2 non-reductive quadruples in the table as they are dual to $1,2$ respectively. As in the previous case, we can also use Conjecture \ref{Whittaker induction} to compute the dual for these two quadruples since $\Delta_{red}$ is polarized in these cases. In this case, the result is summarized in Table \ref{Table G_2 2}.

\begin{rmk}
As in \cite{MWZ2}, for some cases considered in this paper, it is also possible to provide evidence for the duality via the period integral conjecture. However, we do not explore this direction in the present paper.
\end{rmk}

\section{Type $A$}\label{sec:type-A}
\subsection{Classification}
For simplicity, we define a nilpotent orbit $\iota$ to be \emph{Levi-spherical} when $G_\iota$ is a spherical subgroup of $L$.

We determine the hyperspherical data $(G,H,\iota,\rho_H)$ by examining the following four criteria as follows:
\begin{enumerate}
    \item \label{item:Condition-A1} The nilpotent orbit $\iota$ is Levi-spherical.
    \item \label{item:Condition-A2} $\rho_\iota=\Fu/\Fu^+$ is a coisotropic representation of $G_\iota$ and its generic stabilizer $G_\iota'$.
    \item \label{item:Condition-A3} $H \subseteq G_\iota$ is a spherical subgroup of $L$ and its generic stabilizer $H'$ in $L$ is connected.
    \item \label{item:Condition-A4} $\rho_H\oplus \Fu/\Fu^+$ is an anomaly-free coisotropic representation of $H$ and $H'$.
\end{enumerate}

\subsubsection{Determining $\iota$}
We first determine the Levi-spherical nilpotent orbits of $\GL_n$ that are even (i.e., all parts of the partition have the same parity).

\begin{lemma}
    Let $G$ be the general linear group $\GL_n$. If $\iota$ is a Levi-spherical even nilpotent orbit of $G$, then $\iota$ corresponds to one of the following partitions:
    \begin{itemize}
        \item $(n,1^{a_1})$ (hook-type)
        \item $(2^{a_2})$ (Shalika)
        \item The `exceptional' partition $(3^2)$.
    \end{itemize}

\end{lemma}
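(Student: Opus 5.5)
Write the partition as $\lambda=(d_1^{m_1},\dots,d_k^{m_k})$ with $d_1>\dots>d_k$, all $d_i$ of the same parity. First I compute $L$ and $G_\iota$ explicitly. The centralizer of $\iota$ is $G_\iota=\prod_i \GL_{m_i}$, where $\GL_{m_i}$ acts on the multiplicity space of the Jordan blocks of size $d_i$. Since $\lambda$ is even, the eigenvalues of $h(t)$ are $t^{d_i-1-2j}$, and these have one parity. The eigenvalue $t^{c}$ occurs with multiplicity $n_c=\sum_{i:\,d_i>|c|} m_i$. Hence
$$L=\prod_{c}\GL_{n_c},$$
with $c$ running over integers of the appropriate parity in $(-d_1,d_1)$. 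The group $G_\iota$ embeds into each factor $\GL_{n_c}$ block-diagonally as $\prod_{i:\,d_i>|c|}\GL_{m_i}$, and diagonally across the different $c$.

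For $G_\iota$ to be spherical in $L$, it is necessary and sufficient that a Borel subgroup of $G_\iota$ has an open orbit on $L/G_\iota$. So the second step is the dimension inequality
$$\dim L-\dim G_\iota\le \dim B_{G_\iota}.$$
I use it to cut down the possibilities. Each $\GL_{m_i}$ sits diagonally in $d_i$ factors of $L$. If some $d_i\ge 3$ with $m_i\ge 2$, the diagonal $\GL_{m_i}$ sits in at least three factors. The triple diagonal $\GL_m\subset\GL_m^3$ is not spherical for $m\ge2$, since $2m^2>m(m+1)/2$. This forces every part of size at least $3$ to have multiplicity at most $2$. The case $m=2$ survives only when the part equals $3$, and only in a tight situation, because $\GL_2\subset \GL_2^3$ is spherical but $\GL_2\subset\GL_2^4$ is not.

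Next I treat the mixed cases with two or more distinct parts, using the classification of spherical reductive subgroups \cite{BP} (Krämer and Brion's list of spherical pairs of products). Let $d_1>d_2$ be two parts. The middle factors of $L$ then look like $\GL_{m_1+m_2+\cdots}\supset \GL_{m_1}\times\GL_{m_2}\times\cdots$, glued diagonally with the outer factors $\GL_{m_1}$. I argue as follows.

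\begin{enumerate}
\item If $m_1\ge2$, then either $d_1\ge 4$, which is excluded above, or $d_1=3$ and $d_2=1$. In the latter case $L=\GL_2\times\GL_{2+m_2}\times\GL_2$ with $\GL_2\times\GL_{m_2}$ embedded. The dimension count fails unless $m_2=0$, which gives $(3^2)$.
\item If $m_1=1$, the outer factors are tori, and $G_\iota=\GL_1\times\prod_{i\ge2}\GL_{m_i}$. Let $k$ be the number of factors $\GL_{n_c}$ with $n_c\ge 2$, which is at least $2$ when $d_2\ge2$ or a third part exists. Then $\prod_{i\ge 2}\GL_{m_i}$ must be spherical in that product of $k$ factors, each containing it diagonally. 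This fails when $k\ge2$ and the group is nonabelian, and also when it is abelian with too many torus directions. So the only survivor is $d_2=1$, the hook type $(n',1^{a_1})$ (written $(n,1^{a_1})$ in the statement).
\item If there is a single part $d$, with $\lambda=(d^m)$, then $L=\GL_m^d$ with $G_\iota$ the diagonal. This is spherical iff $d\le 2$, or $m=1$ (regular, included in the hook type), or $d=3,m=2$. This gives $(2^{a_2})$ and $(3^2)$.
\end{enumerate}

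The main obstacle is making case 2 rigorous. It needs the precise sphericity of $\GL_1\times\GL_{m_2}\times\cdots$ diagonally embedded in several $\GL$-factors, for example $\GL_{m}\hookrightarrow \GL_{m+1}\times\GL_m$ and similar configurations. Here the bare dimension count is not decisive, and I would invoke the Brion/Mikityuk classification of spherical subgroups in products, or directly exhibit a positive-dimensional family of Borel orbits. Finally, I check that the listed partitions really are Levi-spherical: $\GL_{a}\times\GL_1\subset\GL_{a+1}$ is a spherical pair, the diagonal $\GL_m\subset\GL_m^2$ is symmetric, and the triple diagonal $\GL_2$ is spherical. This is only needed if the statement is read as an equivalence.
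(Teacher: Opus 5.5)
Your computation of $L=\prod_c\GL_{n_c}$ and $G_\iota=\prod_i\GL_{m_i}$ is correct and matches the paper's. The tool you use to exclude cases, however, is wrong. Sphericity of $G_\iota\subset L$ means that a Borel subgroup of $L$ (not of $G_\iota$) has an open orbit on $L/G_\iota$, so the necessary count is $\dim L-\dim G_\iota\le \dim B_L$.

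Your inequality, with $\dim B_{G_\iota}$ on the right, is not a necessary condition. It fails for the Shalika pair $\GL_m\subset \GL_m\times\GL_m$ ($m^2>m(m+1)/2$ for $m\ge 2$) and for $\GL_2\subset\GL_2^3$ ($8>3$). Both pairs are spherical and both appear in the conclusion. Your computation $2m^2>m(m+1)/2$ also contradicts your own later remark that $\GL_2\subset\GL_2^3$ is spherical. With the correct bound, the triple-diagonal count is $2m^2\le 3m(m+1)/2$, which holds for $m\le 3$, so counting alone excludes nothing there.

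Your reductions also test the wrong subgroup. You compare a single factor $\GL_{m_i}$ with factors of $L$, or $\prod_{i\ge 2}\GL_{m_i}$, which drops the $\GL_{m_1}=\GL_1$ that also acts on those factors. Sphericity is not inherited by subgroups of $G_\iota$: for example, $\GL_2\times\GL_2\subset\GL_4$ is spherical while $\GL_2\times 1\subset \GL_4$ is not. What is inherited is the image of all of $G_\iota$ under a projection of $L$ onto a product of its factors, and this is the paper's argument. Projecting onto the $d_k$ factors $\GL_{m_1+\cdots+m_k}$ (those with $|c|<d_k$) forces $k=1$ or ($k=2$, $d_2=1$). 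Projecting onto the $d_1-1$ outer factors $\GL_{m_1}$ then forces $m_1=1$ or $d_1=3$. This also settles your Case 2, which you leave open.

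The surviving configuration $k=2$, $d_1=3$, $d_2=1$, $m_1\ge 2$ is your Case 1, and there your claim that ``the dimension count fails unless $m_2=0$'' is false. For $(3^2,1^b)$ one has $L=\GL_2\times\GL_{b+2}\times\GL_2$ and $\dim L/G_\iota=4b+8$. The diagonal centre of $L$ lies in $G_\iota$, and $B_L$ modulo it has dimension $5+(b+2)(b+3)/2$. So the count is satisfied for every $b\ge 3$, and even your too-strong inequality is satisfied for $b\ge 9$.

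You therefore need a finer argument to exclude $(3^{m_1},1^{b})$. One option: the stabiliser in $\GL_{m_1}\times\GL_b$ of a generic flag of $\GL_{m_1+b}$ projects to a solvable subgroup of $\GL_{m_1}$ (inside a maximal torus when $m_1\le b$). Modulo the centre, that subgroup is too small to have an open orbit on $(\GL_{m_1}/B_{\GL_{m_1}})^2$, which comes from the two outer factors. The paper simply invokes the classification of spherical subgroups at this point.
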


\begin{proof}
We write the partition associated with $\iota$ as $(p_1^{a_1},p_2^{a_2},\dots, p_{\ell}^{a_\ell})$ with $p_1>p_2>\cdots >p_\ell>0$. It is easy to see that: \[G_\iota\simeq \prod_{i=1}^\ell \GL_{a_i},\quad L\simeq\prod_{i=1}^\ell \GL_{a_1+a_2+\cdots+a_{i}}^{\times (p_{i}-p_{i+1})}\] with $p_{\ell+1}=0$. (Refer to Theorem 6.1.3 in \cite{CM93}, for instance.) 
Furthermore, the embedding $G_\iota\hookrightarrow L$ gives a non-trivial composite homomorphism \[\GL_{a_i}\to G_\iota \hookrightarrow L \twoheadrightarrow  \GL_{a_1+a_2+\cdots+a_{j}}^{\times (p_{j}-p_{j+1})}\] whenever $i\leq j$.

Let us consider the embedding of $G_{\iota}$ into the factor $\GL_{a_1+a_2+\cdots+a_{\ell}}^{\times p_{\ell}}$ of $L$. 
Since $G_\iota$ is a reductive spherical subgroup of $L$, it follows that $\prod_{i=1}^l \GL_{a_i}$ is a spherical subgroup of $\GL_{a_1+a_2+\cdots+a_{\ell}}^{\times p_{\ell}}$.
Due to the classification of \cite{KS}, $\prod_{i=1}^l \GL_{a_i}$ is a spherical subgroup if and only if $\ell = 1$ or $\ell = 2$ with $p_2 = 1$.

If $\ell = 2$ and $p_2 = 1$, then $G_\iota = \GL_{a_1}\times\GL_{a_2}$ is embedded into $L = \GL_{a_1+a_2}\times \GL_{a_1}^{\times (p_1 - 1)}$.
For $(L,G_\iota)$ to be a spherical pair, the restriction of the factor $\GL_{a_1}^{\times (p_1 - 1)}$ in $G_\iota$ also has to be a spherical subgroup of $\GL_{a_1}^{\times (p_1 - 1)}$. 
By the classification of \cite{KS}, the possible cases are the following:

\begin{itemize}
    \item $a_1 = 1$. In this case, $\iota = (p_1,1^{a_2})$ is the hook-type orbit.
    \item $a_1 = 2$ and $p_1 = 4$. This is not possible since $\iota$ is even, and $p_1, p_2$ have the same parity.
    \item $a_1 \geq 2$ and $p_1 = 3$. By the classification of \cite{KS}, there are no spherical subgroups of this form.
\end{itemize}

If $\ell = 1$, then $G_\iota = \GL_{a_1}$ and $L = \GL_{a_1}^{\times p_1}$. 
By the classification of \cite{KS}, we have the following cases:
\begin{itemize}
    \item $p_1 = 1$. In this case, $\iota = (1^{a_1})$ is the trivial orbit.
    \item $p_1 = 2$. In this case, $\iota = (2^{a_1})$ is the Shalika orbit.
    \item $p_1 = 3$ and $a_1 = 2$. In this case, $\iota = (3^2)$ is the exceptional orbit.
\end{itemize}

In summary, the only Levi-spherical even nilpotent orbits of $\GL_n$ are the ones listed in the lemma.
\end{proof}

Now let $\iota$ be a non-even nilpotent orbit of $\GL_n$ and let $\iota_{odd}$ (resp. $\iota_{even}$) be the odd (resp. even) part of its corresponding partition. 
Similar to the argument of Section 5.4 in \cite{BP}, $L$ and $G_\iota$ can be decomposed along $\iota_{odd}$ and $\iota_{even}$:
\[L = L_{{odd}} \times L_{{even}},\quad  G_\iota = G_{\iota, {odd}} \times G_{\iota, {even}},\]
where $L_{{odd}}$ (resp. $L_{{even}}$) and $G_{\iota, {odd}}$ (resp. $G_{\iota, {even}}$) are the Levi subgroup and stabilizer corresponding to the partition $\iota_{odd}$ (resp. $\iota_{even}$).
Therefore, $\iota$ is Levi-spherical if and only if both $\iota_{odd}$ and $\iota_{even}$ are Levi-spherical. That is:
\begin{itemize}
    \item $\iota_{odd}$ is one of the following: \((1^k),(2m+1,1^k),(3^2)\).
    \item $\iota_{even}$ is one of the following: \((2^n),(2n)\).
\end{itemize}

We verify Condition \eqref{item:Condition-A2} for each combination; that is, we check whether $\Fu/\Fu^+$ is a coisotropic representation of $G_\iota$ and $G_\iota'$:

\begin{itemize}
    \item If $\iota_{odd}=(1^k)$, then $L_{odd} = G_{\iota,{odd}}=G_{\iota,odd}' = \GL_k$.
    \begin{itemize}
        \item If $\iota_{even}=(2^n)$, then $L=\GL_n\times \GL_n\times \GL_k$, $G_\iota = \GL_k\times \GL_n$, $\Fu/\Fu^+ = T(std_{\GL_k} \otimes std_{\GL_n})$ and $G_{\iota}' = \GL_k\times \BG_m^n $. Referring to the tables in \cite{K}, $\Fu/\Fu^+$ is a coisotropic representation of $G_{\iota}'$ if and only if $k=1$ or $n\leq 2$.
        \item If $\iota_{even}=(2n)$, then $L=\GL_k\times \GL_{1}^{2n}$, $G_\iota=G_{\iota}'=\GL_k\times \GL_1$, and $\Fu/\Fu^+ = T(std_{\GL_k}\otimes std_{\GL_1})$. Referring to the tables in \cite{K}, $\Fu/\Fu^+$  is coisotropic in this case.
    \end{itemize}
    \item If $\iota_{odd}=(2m+1,1^k)$, then $L_{odd} = \GL_{k+1}\times \GL_1^{2m}$, $G_{\iota,odd} = \GL_{k}\times \GL_1$.
    \begin{itemize}
        \item If $\iota_{even}=(2^n)$, then $L=\GL_n\times \GL_n\times \GL_{k+1}\times \GL_1^{2m}$, $G_\iota=\GL_n\times \GL_{k}\times \GL_1$,   $\Fu/\Fu^+ = T(std_{\GL_k} \otimes std_{\GL_n})\oplus T(std_{\GL_1} \otimes std_{\GL_n})^{\oplus 2}$, and $G_\iota'=\BG_m^n\times \GL_{k-1}\times \GL_1$. Referring to the tables in \cite{K}, $\Fu/\Fu^+$ is not a coisotropic representation of $G_\iota'$.
        \item If $\iota_{even}=(2n)$, then $L=\GL_{1}^{2n}\times \GL_{k+1}\times \GL_1^{2m}$, $G_\iota=\GL_1\times \GL_k\times \GL_1$, 
        $\Fu/\Fu^+ = T(std_{\GL_{k}}\otimes std_{\GL_{1}}) \oplus T(std_{\GL_{1}}\otimes std_{\GL_{1}})^{\oplus\min\{2n-1,2m\}}$, and $G_\iota=\GL_1\times \GL_{k-1}\times \GL_1$. Referring to the tables in \cite{K}, $\Fu/\Fu^+$ is not a coisotropic representation of $G_\iota'$.
    \end{itemize}
    \item If $\iota_{odd}=(3^2)$, then $L_{odd} = \GL_2^{3}$ and $G_{\iota,odd} = \GL_2$.
    \begin{itemize}
        \item If $\iota_{even}=(2^n)$, then $L=\GL_{n}^2\times \GL_{2}^{3}$, $G_\iota=\GL_2\times \GL_n$, $\Fu/\Fu^+ = T(std_{\GL_2} \otimes std_{\GL_n})^{\oplus 2}$ and $G_\iota'=\GL_1\times \BG_m^n$. Referring to the tables in \cite{K}, $\Fu/\Fu^+$ is not a coisotropic representation of $G_\iota'$.
        \item If $\iota_{even}=(2n)$, then $L=\GL_{1}^{2n}\times \GL_{2}^3$, $G_\iota=\GL_1\times \GL_2$, $\Fu/\Fu^+ = T(std_{\GL_2}\otimes std_{\GL_1})^{\oplus i}$ where $i=2$ if $n=1$ and $i=3$ if $n>1$, and $G_\iota'=\GL_1\times \GL_1$. Referring to the tables in \cite{K}, $\Fu/\Fu^+$ is not a coisotropic representation of $G_\iota'$.
    \end{itemize}
\end{itemize}

To summarize, the only non-even nilpotent orbits satisfying Conditions \eqref{item:Condition-A1} and \eqref{item:Condition-A2} are $(2m,1^n),(2^n,1),(2^2,1^n)$. As a result, we know that when $G$ is of Type $A$, the set $Nil_0(\Fg)$ is equal to 
$$\{(n,1^m),(2^n),(3^2),(2^n,1),(2^2,1^n)\}.$$

\subsubsection{Determining $H$ and $\rho_H$}

We now list the anomaly-free coisotropic representations of certain groups for reference \footnote{To simplify the notation, we do not distinguish between groups of the same root types throughout this paper. For the Type $A$ cases considered in this section, it is occasionally necessary to replace the general linear group with the special linear group to ensure a representation is symplectic  (e.g. the exterior cube representation of $\GL_6$). But to simplify the notation we will still write the general linear group.}. 

\begin{itemize}
    \item $\GL_n$: $0$, $T(std)$, $T(std\oplus std)$, $T(\wedge^2)$, $T(\wedge^2 \oplus std)$ for general $n$; $\wedge^3,\wedge^3\oplus std$ when $n=6$.
    \item $\GL_n\times \GL_m$ with $m,n>1$: $T(std_{\GL_m}\otimes std_{\GL_n})$, $T(std_{\GL_m}\otimes std_{\GL_n}\oplus std_{\GL_n})$ for general $n,m$; $T(\wedge_{\GL_4}^{2}\otimes std_{\GL_2}), \;T(\wedge_{\GL_4}^{2}\otimes std_{\GL_2}\oplus std_{\GL_4}),\;T(\wedge_{\GL_4}^{2}\otimes std_{\GL_2}\oplus std_{\GL_4}\oplus std_{\GL_@}),\;T(\wedge_{\GL_4}^{2}\otimes std_{\GL_2}\oplus std_{\GL_4}\otimes std_{\GL_2})$ when $n=4$ and $m=2$; $T(\wedge_{\GL_6}^{3}\otimes std_{\GL_2}\oplus std_{\GL_6})$ when $n=6$ and $m=2$.
    \item $\Sp_{2n}$: $0$, $T(std)$, $T(std\oplus std)$ for general $n$, $T(std_{\SO_5})$ when $n=2$, $T(\wedge_{0}^{3}\oplus std)$ when $n=3$. 
\end{itemize}
Note that the $T(Sym^2)$ representation of $\GL_n$ and the $Sym^3$ representation of $\GL_2$ are excluded since the generic stabilizer is not connected.

We will analyze each nilpotent orbit in $Nil_0(\Fg)$ one by one to find all the $H$ and $\rho_H$ that satisfy Conditions \eqref{item:Condition-A3} and \eqref{item:Condition-A4}.

\begin{itemize}
    \item When $\iota$ is the trivial orbit: $\iota=(1^n)$, $L=G_\iota=\GL_n$ and $\Fu/\Fu^+=0$. In this case $H$ can be all possible reductive spherical subgroups of $\GL_n$.
    \begin{itemize}
        \item Let $H=G_\iota$. Then $\rho_H$ can be any anomaly-free coisotropic representation of $\GL_n$, which are $0$, $T(std)$, $T(std\oplus std)$, $T(\wedge^2)$, $T(\wedge^2 \oplus std)$ for general $n$, and $\wedge^3,\wedge^3\oplus std$ when $n=6$.
    
        \item Let $H=\GL_m\times \GL_{n-m}$. The generic stabilizer is $H'=\GL_{n-2m}\times \GL_1^{m}$. In this case, $\rho_H$ can be $0$, $T(\GL_m)$ or $T(\GL_{n-m})$ for general $n,m$. If $m=1$, $\rho_H$ can also be $T(\wedge^2_{\GL_{n-1}})$ for general $n$. If $m=1$ (resp. $m=2$) and $n-m=6$, $\rho_H$ can also be $\wedge_{\GL_6}^3$.

        \item Let $n=2m+1$ and $H=\Sp_{2m}$. The generic stabilizer is trivial. The only possible choice for $\rho_H$ is $0$.

        \item Let $n=2m$ and $H=\Sp_{2m}$ with $m>1$. The generic stabilizer is $H'=\SL_2^{m}$. In this case, $\rho_H$ can be $0$ or $T(std)$.
    \end{itemize}

    \item When $\iota = (n)$ is the regular orbit, we have $L=\GL_{1}^{n}$ and $G_\iota=\GL_{1}$. $\Fu/\Fu^+=0$. In this case we must have $H=G_\iota$ and $\rho_H=0$.

    \item When $\iota = (2m+1,1^{n})$ is an even hook-type orbit, $L=\GL_{n+1}\times \GL_1^{2m}$, $G_\iota = \GL_{n}\times \GL_{1} $ and $\Fu/\Fu^+=0$. In this case, when $n$ is odd, the only possible $H$ is $G_\iota$; when $n$ is even, $H$ can be $G_\iota$ or $\Sp_n$. When $H=G_\iota$, the generic stabilizer is $H'=\GL_{n-1}\times \GL_{1}$ and $\rho_H$ can be $0$, $T(std)$, or $T(\wedge^2)$. When $n$ is even and $H=\Sp_n$, the generic stabilizer is trivial and $\rho_H$ has to be $0$.

    \item When $\iota = (2^{n})$ is the ``Shalika" orbit with $n>1$, $L=\GL_{n}\times \GL_{n} $, $G_\iota=\GL_{n}$ and $\Fu/\Fu^+=0$. In this case, the only possible $H$ is also $G_\iota$. The generic stabilizer is $H'=\BG_{m}^{n} $ and $\rho_H$ can be $0$ or $T(std)$.

    \item When $\iota = (2m,1^{n})$ is a non-even hook-type orbit, $L=\GL_{n}\times \GL_{1}^{ 2m}$, $G_\iota=\GL_{n}\times \GL_{1} $ and $\Fu/\Fu^+=T(std_{\GL_n}\otimes std_{\GL_1})$. The argument is similar to the trivial orbit case, except that $T(std_{\GL_n}\otimes std_{\GL_1})$ should be a subrepresentation of $\rho_H\oplus \Fu/\Fu^+$. Thus, the possible cases are $H=G_\iota$ or $H=\Sp_{n}$ (when $n$ is even). 
    \begin{itemize}
        \item If $H=G_\iota$, $\rho_H$ can be $0,T(std_{\GL_n}),T(\wedge_{\GL_n}^{2})$.
        \item If $n$ is even and $H=\Sp_{n}$, $\rho_H$ has to be $0$.
    \end{itemize}

    \item When $\iota = (2^{n},1)$, $L=\GL_{n}^{2}\times \GL_{1} $, $G_\iota=\GL_{n}\times \GL_{1} $ and $\Fu/\Fu^+=T(std_{\GL_n}\otimes std_{\GL_1})$. The only possible case is $H=G_\iota$ and $\rho_H=0$.

    \item When $\iota = (2^{2},1^{n})$, $L=\GL_{2}^{2}\times \GL_{n} $, $G_\iota=\GL_{2}\times \GL_{n} $ and $\Fu/\Fu^+=T(std_{\GL_2}\otimes std_{\GL_n})$. In this case we must have $H=G_\iota$ and $\rho_H=0$.

    \item When $\iota = (3^2)$, $L=\GL_2^{3}$, $G_\iota=\GL_2$ and $\Fu/\Fu^+=0$. In this case $H$ has to be $G_\iota$. The generic stabilizer is $\GL_1$ and $\rho_H$ has to be $0$.
\end{itemize}

To summarize, the tables below present the anomaly-free hyperspherical BZSV quadruples for Type $A$. We recall that $H'$ is the generic stabilizer of $H$ acting on $\Fh^{\perp}$ and $\rho_{H,\iota}=\Fu/\Fu^+\oplus \rho_H$. We separate the cases by parity. In these tables, $m$ is an integer such that $m\leq n$. Also we will skip the $\GL_1$-action in some representations (i.e. we may just write the representation $std_{\GL_n}\otimes std_{\GL_1}$ as $std_{\GL_n}$).

\begin{landscape}
\thispagestyle{empty}
\centering
\resizebox{\linewidth}{!}{%
\begin{tabular}{|c|c|c|c|c|c|}
\hline
\textnumero & $\Delta=(G,H,\iota,\rho_H)$  & $[L,L]$  & $\Fu/\Fu^{+}$  & $H'$  & $\rho_{H,\iota}|_{H'}$ \\
\hline
1  & $(\GL_{2n+1},\GL_{2n+1},(1^{2n+1}),0)$  & $\GL_{2n+1}$  & $0$  & $\GL_{2n+1}$  & $0$ \\
\hline 
2  & $(\GL_{2n+1},\GL_{2n+1},(1^{2n+1}),T(std_{\GL_{2n+1}}))$  & $\GL_{2n+1}$  & $0$  & $\GL_{2n+1}$  & $T(std_{\GL_{2n+1}})$ \\
\hline 
3  & $(\GL_{2n+1},\GL_{2n+1},(1^{2n+1}),T(\wedge^{2}))$  & $\GL_{2n+1}$  & $0$  & $\GL_{2n+1}$  & $T(\wedge^{2})$ \\
\hline 
4  & $(\GL_{2n+1},\GL_{2n+1},(1^{2n+1}),T(std_{\GL_{2n+1}}\oplus std_{\GL_{2n+1}}))$  & $\GL_{2n+1}$  & $0$  & $\GL_{2n+1}$  & $T(std_{\GL_{2n+1}}\oplus std_{\GL_{2n+1}})$ \\
\hline 
5  & $(\GL_{2n+1},\GL_{2n+1},(1^{2n+1}),T(\wedge_{}^{2}\oplus std_{\GL_{2n+1}}))$  & $\GL_{2n+1}$  & $0$  & $\GL_{2n+1}$  & $T(\wedge^{2}\oplus std_{\GL_{2n+1}})$ \\
\hline 
6  & $(\GL_{2n+1},\GL_{2n-m+1}\times\GL_{m},(1^{2n+1}),0)$  & $\GL_{2n+1}$  & $0$  & $\GL_{2n-2m+1}\times \GL_1^{m}$  & $0$ \\
\hline 
7  & $(\GL_{2n+1},\GL_{2n-m+1}\times\GL_{m},(1^{2n+1}),T(std_{\GL_{2n-m+1}}))$  & $\GL_{2n+1}$  & $0$  & $\GL_{2n-2m+1}\times \GL_1^{m}$  & $T(std_{\GL_{2n-2m+1}})\oplus T(std_{\GL_{1}}^{\oplus m})$ \\
\hline 
8  & $(\GL_{2n+1},\GL_{2n-m+1}\times\GL_{m},(1^{2n+1}),T(std_{\GL_{m}}))$  & $\GL_{2n+1}$  & $0$  & $\GL_{2n-2m+1}\times \GL_1^{m}$  & $T(std_{\GL_{1}}^{\oplus m})$ \\
\hline 
9  & $(\GL_{2n+1},\GL_{2n},(1^{2n+1}),T(\wedge^{2}))$  & $\GL_{2n+1}$  & $0$  & $\GL_{2n-1}\times \GL_1^2$  & $T(\wedge^{2}\oplus std_{\GL_{2n-1}})$ \\
\hline 
10  & $(\GL_{2n+1},\Sp_{2n},(1^{2n+1}),0)$  & $\GL_{2n+1}$  & $0$  & $1$  & $0$ \\
\hline 
11  & $(\GL_{2n+1},\GL_{2}\times\GL_{2n-3},(2^{2},1^{2n-3}),0)$  & $\GL_{2}^{2}\times\GL_{2n-3}$  & $T(std_{\GL_{2}}\otimes std_{\GL_{2n-3}})$  & $\GL_1^{ 2}\oplus\GL_{2n-3}$  & $T(std_{\GL_{2n-3}})^{\oplus2}$ \\
\hline 
12  & $(\GL_{2n+1},\GL_{n},(2^{n},1),0)$  & $\GL_{n}^{ 2}$  & $T(std)$  & $\GL_1^{ n}$  & $T(std_{\GL_{1}}^{\oplus n})$ \\
\hline 
13  & $(\GL_{2n+1},\GL_{2m+1},(2n-2m,1^{2m+1}),0)$  & $\GL_{2m+1}$  & $T(std_{\GL_{2m+1}})$  & $\GL_{2m+1}$  & $T(std_{\GL_{2m+1}})$ \\
\hline 
14  & $(\GL_{2n+1},\GL_{2m+1},(2n-2m,1^{2m+1}),T(std_{\GL_{2m+1}}))$  & $\GL_{2m+1}$  & $T(std_{\GL_{2m+1}})$  & $\GL_{2m+1}$  & $T(std_{\GL_{2m+1}}\oplus std_{\GL_{2m+1}})$ \\
\hline 
15  & $(\GL_{2n+1},\GL_{2m+1},(2n-2m,1^{2m+1}),T(\wedge^{2}))$  & $\GL_{2m+1}$  & $T(std_{\GL_{2m+1}})$  & $\GL_{2m+1}$  & $T(\wedge^{2}\oplus std_{\GL_{2m+1}})$ \\
\hline 
16  & $(\GL_{2n+1},\GL_{2m},(2n-2m+1,1^{2m}),0)$  & $\GL_{2m+1}$  & $0$  & $\GL_{2m-1}\times \GL_1$  & $0$ \\
\hline 
17  & $(\GL_{2n+1},\GL_{2m},(2n-2m+1,1^{2m}),T(std_{\GL_{2m}}))$  & $\GL_{2m+1}$  & $0$  & $\GL_{2m-1}\times \GL_1$  & $T(std_{\GL_{2m-1}}\oplus std_{\GL_{1}})$ \\
\hline 
18  & $(\GL_{2n+1},\GL_{2m},(2n-2m+1,1^{2m}),T(\wedge^{2}))$  & $\GL_{2m+1}$  & $0$  & $\GL_{2m-1}\times \GL_1$  & $T(\wedge^{2}\oplus std_{\GL_{2m-1}})$ \\
\hline 
19  & $(\GL_{2n+1},\Sp_{2m},(2n-2m+1,1^{2m}),0)$  & $\GL_{2m+1}$  & $0$  & $1$  & $0$ \\
\hline 
20  & $(\GL_{2n+1},\GL_{6},(2n-5,1^{6}),\wedge^{3})$  & $\GL_{7}$  & $0$  & $\GL_{5}\times \GL_1$  & $T(\wedge^{2})$ \\
\hline 
21  & $(\GL_{2n+1},1,(2n+1),0)$  & $1$  & $0$  & $1$  & $0$ \\
\hline 
\end{tabular}
}
\captionof{table}{Type $A_{2n}$}
\label{Table A_{2n}}
\vfill
\begin{center}
\thepage
\end{center}
\end{landscape}

\begin{landscape}
\thispagestyle{empty}
\centering
\resizebox{\linewidth}{!}{%
\begin{tabular}{|c|c|c|c|c|c|}
\hline
\textnumero & $\Delta=(G,H,\iota,\rho_H)$  & $[L,L]$  & $\Fu/\Fu^{+}$  & $H'$  & $\rho_{H,\iota}|_{H'}$ \\
\hline
1  & $(\GL_{2n},\GL_{2n},(1^{2n}),0)$  & $\GL_{2n}$  & $0$  & $\GL_{2n}$  & $0$ \\
\hline 
2  & $(\GL_{2n},\GL_{2n},(1^{2n}),T(std_{\GL_{2n}}))$  & $\GL_{2n}$  & $0$  & $\GL_{2n}$  & $T(std_{\GL_{2n}})$ \\
\hline 
3  & $(\GL_{2n},\GL_{2n},(1^{2n}),T(\wedge^{2}))$  & $\GL_{2n}$  & $0$  & $\GL_{2n}$  & $T(\wedge^{2})$ \\
\hline 
4  & $(\GL_{2n},\GL_{2n},(1^{2n}),T(std_{\GL_{2n}}\oplus std_{\GL_{2n}}))$  & $\GL_{2n}$  & $0$  & $\GL_{2n}$  & $T(std_{\GL_{2n}}\oplus std_{\GL_{2n}})$ \\
\hline 
5  & $(\GL_{2n},\GL_{2n},(1^{2n}),T(\wedge^{2}\oplus std_{\GL_{2n}}))$  & $\GL_{2n}$  & $0$  & $\GL_{2n}$  & $T(\wedge^{2}\oplus std_{\GL_{2n}})$ \\
\hline 
6  & $(\GL_{2n},\GL_{2n-m}\times\GL_{m},(1^{2n}),0)$  & $\GL_{2n}$  & $0$  & $\GL_{2n-2m}\times \GL_1^{ m}$  & $0$ \\
\hline 
7  & $(\GL_{2n},\GL_{2n-m}\times\GL_{m},(1^{2n}),T(std_{\GL_{2n-m}}))$  & $\GL_{2n}$  & $0$  & $\GL_{2n-2m}\times \GL_1^{ m}$  & $T(std_{\GL_{2n-2m}})\oplus T(std_{\GL_{1}}^{\oplus m})$ \\
\hline 
8  & $(\GL_{2n},\GL_{2n-m}\times\GL_{m},(1^{2n}),T(std_{\GL_{m}}))$  & $\GL_{2n}$  & $0$  & $\GL_{2n-2m}\times \GL_1^{ m}$  & $T(std_{\GL_{1}}^{\oplus m})$ \\
\hline 
9  & $(\GL_{2n},\GL_{2n-1},(1^{2n}),T(\wedge^{2}))$  & $\GL_{2n}$  & $0$  & $\GL_{2n-2}\times\GL_1^{ 2}$  & $T(\wedge^{2}\oplus std_{\GL_{2n-2}})$ \\
\hline 
10  & $(\GL_{2n},\Sp_{2n},(1^{2n}),0)$  & $\GL_{2n}$  & $0$  & $\SL_2^{ n}$  & $0$ \\
\hline 
11  & $(\GL_{2n},\Sp_{2n},(1^{2n}),T(std_{\Sp_{2n}}))$  & $\GL_{2n}$  & $0$  & $\SL_2^{ n}$  & $T(std_{\SL_{2}}^{\oplus n})$ \\
\hline 
12  & $(\GL_{2n},\GL_{2}\times\GL_{2n-4},(2^{2},1^{2n-4}),0)$  & $\GL_{2}^{ 2}\times\GL_{2n-4}$  & $T(std_{\GL_{2}}\otimes std_{\GL_{2n-4}})$  & $\GL_1^{ 2}\oplus\GL_{2n-4}$  & $T(std_{\GL_{2n-4}})^{\oplus2}$ \\
\hline 
13  & $(\GL_{2n},\GL_{n},(2^{n}),0)$  & $\GL_{n}^{ 2}$  & $0$  & $\GL_1^{ n}$  & $0$ \\
\hline 
14  & $(\GL_{2n},\GL_{n},(2^{n}),T(std_{\GL_{n}}))$  & $\GL_{n}^{ 2}$  & $0$  & $\GL_1^{ n}$  & $T(std_{\GL_{1}}^{\oplus n})$ \\
\hline 
15  & $(\GL_{2n},\GL_{2m+1},(2n-2m-1,1^{2m+1}),0)$  & $\GL_{2m+2}$  & $0$  & $\GL_{2m}\times \GL_1$  & $0$ \\
\hline 
16  & $(\GL_{2n},\GL_{2m+1},(2n-2m-1,1^{2m+1}),T(std_{\GL_{2m+1}}))$  & $\GL_{2m+2}$  & $0$  & $\GL_{2m}\times \GL_1$  & $T(std_{\GL_{2m}}\oplus std_{\GL_{1}})$ \\
\hline 
17  & $(\GL_{2n},\GL_{2m+1},(2n-2m-1,1^{2m+1}),T(\wedge^{2}))$  & $\GL_{2m+2}$  & $0$  & $\GL_{2m}\times \GL_1$  & $T(\wedge^{2}\oplus std_{\GL_{2m}})$ \\
\hline 
18  & $(\GL_{2n},\GL_{2m},(2n-2m,1^{2m}),0)$  & $\GL_{2m}$  & $T(std_{\GL_{2m}})$  & $\GL_{2m}$  & $T(std_{\GL_{2m}})$ \\
\hline 
19  & $(\GL_{2n},\GL_{2m},(2n-2m,1^{2m}),T(std_{\GL_{2m}}))$  & $\GL_{2m}$  & $T(std_{\GL_{2m}})$  & $\GL_{2m}$  & $T(std_{\GL_{2m}}\oplus std_{\GL_{2m}})$ \\
\hline 
20  & $(\GL_{2n},\GL_{2m},(2n-2m,1^{2m}),T(\wedge^{2}))$  & $\GL_{2m}$  & $T(std_{\GL_{2m}})$  & $\GL_{2m}$  & $T(\wedge^{2}\oplus std_{\GL_{2m}})$ \\
\hline 
21  & $(\GL_{2n},\Sp_{2m},(2n-2m,1^{2m}),0)$  & $\GL_{2m}$  & $T(std_{\Sp_{2m}})$  & $\SL_{2}^{ m}$  & $T(std_{\SL_2}^{\oplus m})$ \\
\hline 
22  & $(\GL_{2n},\GL_{6},(2n-6,1^{6}),\wedge^{3})$  & $\GL_{6}$  & $T(std_{\GL_{6}})$  & $\GL_{6}$  & $\wedge^{3}\oplus T(std_{\GL_{6}})$ \\
\hline 
23  & $(\GL_{2n},1,(2n),0)$  & $1$  & $0$  & $1$  & $0$ \\
\hline 
24  & $(\GL_{6},\GL_{6},(1^{6}),\wedge^{3})$  & $\GL_{6}$  & $0$  & $\GL_{6}$  & $\wedge^{3}$ \\
\hline 
25  & $(\GL_{6},\GL_{6},(1^{6}),\wedge^{3}\oplus T(std_{\GL_{6}}))$  & $\GL_{6}$  & $0$  & $\GL_{6}$  & $\wedge^{3}\oplus T(std_{\GL_{6}})$ \\
\hline 
26  & $(\GL_{6},\GL_{2},(3^{2}),0)$  & $\GL_{2}^{ 3}$  & $0$  & $\GL_1$  & $0$ \\
\hline 
27  & $(\GL_8, \GL_6\times \GL_2, (1^8), \wedge^3_{\GL_6})$ & $\GL_8$ & $0$ &  $\GL_4\times \GL_2$ & $T(std_{\GL_{4}})\oplus\wedge^{2}_{\GL_4}\otimes std_{\GL_2}$\\ 
\hline 
\end{tabular}
}
\captionof{table}{Type $A_{2n-1}$}
\label{Table A_{2n-1}}
\vfill
\begin{center}
\thepage
\end{center}
\end{landscape}

\subsection{The Dual}
As explained in the previous chapter, we use the following three strategies to compute the dual.
\begin{enumerate}
\item The polarized cases studied in \cite{BSV}.
\item The vector space cases studied in \cite{MWZ2}.
\item Conjecture \ref{Whittaker induction} regarding the behavior of the dual under Whittaker induction.
\end{enumerate}

We now examine the dual of the quadruples in the above tables case by case. We begin with the cases where $G=\GL_{2n+1}$.

\begin{itemize}
    \item Quadruples 1-5 are vector space cases. We refer to \cite{MWZ2} for their duals:
    
    \begin{itemize}
    \item Quadruple 1 is dual to $(\GL_{2n+1},1,(2n+1),0)$, which is Quadruple 21.
    \item Quadruple 2 is dual to $(\GL_{2n+1},\GL_1,(2n,1),0)$, which is Quadruple 13 with $m=0$.
    \item Quadruple 3 is dual to $(\GL_{2n+1},\GL_{n},(2^{n},1),0)$, which is Quadruple 12.
    \item Quadruple 4 is dual to $(\GL_{2n+1},\GL_2,(2n-1,1^2),T(std_{\GL_2}))$, which is Quadruple 17 with $m=1$.
    \item Quadruple 5 is dual to $(\GL_{2n+1},\GL_{n+1}\times\GL_n,1,T(std_{\GL_{n+1}}))$, which is Quadruple 7 with $m=n$.
    \end{itemize}
    
    \item For Quadruples 6-10, $\iota$ is the trivial orbit, and they are all polarized cases. We can use the algorithm in \cite{BSV} to compute the dual. 
    \begin{itemize}
        \item Quadruple 6 is dual to $(\GL_{2n+1}, \Sp_{2m}, (2n-2m+1,1^{2m}), 0)$, which is Quadruple 19.
        \item Quadruple 7 is dual to $(\GL_{2n+1},\GL_{2n+1},1,T(std\oplus \wedge^2))$ when $m=n$ (this is Quadruple 5); and is dual to
        $(\GL_{2n+1}, \GL_{2m+1}, (2n-2m,1^{2m+1}), T(\wedge^2))$ when $m<n$ (this is Quadruple 15). 
        \item Quadruple 8 is dual to $(\GL_{2n+1}, \GL_{2m}, (2n-2m+1,1^{2m}), T(\wedge^2))$, which is   Quadruple 18 when $m<n$ and Quadruple 9 when $m=n$.
        \item Quadruple 9 is dual to $(\GL_{2n+1}, \GL_{n+1}\times \GL_{n}, (1^{2n+1}), T(std_{\GL_{n}}))$, which is Quadruple 8 with $m=n$.
        \item Quadruple 10 is dual to $(\GL_{2n+1}, \GL_{n+1}\times \GL_{n}, (1^{2n+1}), 0)$ which is Quadruple 6 when $m=n$.
    \end{itemize}
    \item For Quadruple 11, $L=\GL_2^{ 2}\times \GL_{2n-3}$, and  $\Delta_{red}=(\GL_2^{ 2}\times \GL_{2n-3},\GL_{2}\times\GL_{2n-3},0,T(std_{\GL_{2}}\otimes std_{\GL_{2n-3}}))$ which is a polarized case. Its dual is $\hat\Delta_{red}=(\GL_2^{ 2}\times \GL_{2n-3},\GL_{2}^{ 2}\times \GL_{2},1^2\times(2n-5,1^2), std_{\GL_{2}}\otimes std_{\GL_{2}} \otimes std_{\GL_{2}})$. By Conjecture \ref{Whittaker induction}, $\hat\Delta$ is $(\GL_{2n+1},\GL_{6},(2n-5,1^6),\wedge^3)$, which is Quadruple 20.

    \item For Quadruple 12, $\iota=(2^n,1)$, $L=\GL_n\times \GL_n$, and $\Delta_{red}=(\GL_n\times\GL_n,\GL_n,1,T(std_{\GL_n}))$. This is a polarized case, and the dual is $\hat\Delta_{red}=(\GL_n\times\GL_n,\GL_n\times \GL_n,1,T(std_{\GL_n}\otimes std_{\GL_n}))$.
    By Conjecture \ref{Whittaker induction}, $\hat\Delta = (\GL_{2n+1},\GL_{2n+1},1,T(\wedge^2))$, which is  Quadruple 3.

    \item For Quadruples 13-15, $\iota$ is a non-even hook-type orbit $(2n-2m,1^{2m+1})$, $L=\GL_{2m+1}\times \GL_1^{2n-2m}$, and  $\Delta_{red}=(\GL_{2m+1},\GL_{2m+1},0,\rho_H\oplus T(std_{\GL_{2m+1}}))$ and they are all vector space cases.
    \begin{itemize}
        \item For Quadruple 13, $\rho_H=0$, and $\hat\Delta_{red}=(\GL_{2m+1},\GL_{1},(2m,1),0)$. 
        By Conjecture \ref{Whittaker induction}, $\hat\Delta = (\GL_{2n+1},\GL_{2n-2m+1},(2m, 1^{2n-2m+1}),0)$  which is just Quadruple 13 except replacing $m$ with $n-m$.
        \item For Quadruple 14, $\rho_H=T(std_{\GL_{2m+1}})$, and
        $\hat\Delta_{red}=(\GL_{2m+1},\GL_{2},(2m-1,1^2),T(std_{\GL_{2}}))$.
        By Conjecture \ref{Whittaker induction}, $\hat\Delta=(\GL_{2n+1},\GL_{2n-2m+2},(2m-1, 1^{2n-2m+2}),T(std_{\GL_{2n-2m+2}}))$ which is Quadruple 17.
        \item For Quadruple 15, $\rho_H=T(\wedge^2)$, and $\hat\Delta_{red}=(\GL_{2m+1},\GL_{m+1}\times\GL_{m},1,T(std_{\GL_{m+1}}))$. 
        By Conjecture \ref{Whittaker induction}, $\hat\Delta=(\GL_{2n+1}, \GL_{2n-m+1}\times \GL_m, 1, T(std_{\GL_{2n-2m+1}}))$ which is Quadruple 7.
    \end{itemize}

    \item For Quadruples 16-19, $\iota$ is an even hook-type orbit $(2n-2m+1,1^{2m})$, $L=\GL_{2m+1}\times \GL_1^{2n-2m}$ and $\rho_\iota$ is trivial. For Quadruples 16-18, $\Delta_{red}=(\GL_{2m+1},\GL_{2m},1,\rho_H)$, while for Quadruple 19, $\Delta_{red}=(\GL_{2m+1},\Sp_{2m},1,0)$. They are all polarized cases.
    \begin{itemize}
        \item For Quadruple 16, $\rho_H=0$ and $\hat\Delta_{red}=(\GL_{2m+1},\GL_2,(2m-1,1^2),0)$. 
        By Conjecture \ref{Whittaker induction}, $\hat\Delta=(\GL_{2n+1},\GL_{2n-2m+2},(2m-1,1^{2n-2m+2}),0)$ which is just Quadruple 16 except replacing $m$ with $n-m+1$. 
        \item For Quadruple 17, $\rho_H=T(std_{\GL_{2m}})$, and $\hat\Delta_{red}=(\GL_{2m+1},\GL_3,(2m-2, 1^3),T(std_{\GL_3}))$. 
        By Conjecture \ref{Whittaker induction}, $\hat\Delta=(\GL_{2n+1},\GL_{2n-2m+3},(2m-2,1^{2n-2m+3}),T(std_{\GL_{2n-2m+3}}))$ which is Quadruple 14.
        \item For Quadruple 18, $\rho_H=T(\wedge^2)$ and $\hat\Delta_{red}=(\GL_{2m+1},\GL_{m+1}\times\GL_m,1,T(std_{\GL_m}))$.
        By Conjecture \ref{Whittaker induction}, $\hat\Delta=(\GL_{2n+1},\GL_{2n-m+1}\times \GL_m,1,T(std_{\GL_m}))$ which is Quadruple 8.
        \item For Quadruple 19, $\hat\Delta_{red}=(\GL_{2m+1},\GL_{m+1}\times\GL_m,1,0)$.
        By Conjecture \ref{Whittaker induction}, $\hat\Delta=(\GL_{2n+1},\GL_{2n-m+1}\times \GL_m,1,0)$ which is Quadruple 6.
    \end{itemize}

    \item For Quadruple 20, $\iota=(2n-5,1^6)$, $L=\GL_7\times \GL_1^{2n-6}$, and
    $\Delta_{red}=(\GL_7,\GL_6,1,\wedge^3)$. Although we cannot compute the dual of $\Delta_{red}$ since it is neither polarized or a vector space case, by our argument for Quadruple 11 we know that the dual of $\Delta$ should be Quadruple 11.

    \item For Quadruple 21, this is the Whittaker model case, and it is dual to $(\GL_{2n+1},\GL_{2n+1},1,0)$ which is Quadruple 1.
\end{itemize}

Now we consider the cases where $G=\GL_{2n}$.
\begin{itemize}
    \item For Quadruples 1-5, they are the vector space cases. We refer to \cite{MWZ2} for their duals:
    
    \begin{itemize}
    \item Quadruple 1 is dual to $(\GL_{2n},1,(2n),0)$, which is Quadruple 23.
    \item Quadruple 2 is dual to $(\GL_{2n},\GL_1,(2n-1,1),0)$ which is Quadruple 15 with $m=0$.
    \item Quadruple 3 is dual to $(\GL_{2n},\GL_{n},(2^n),T(std_{\GL_n}))$, which is  Quadruple 14.
    \item Quadruple 4 is dual to $(\GL_{2n},\GL_2,(2n-2,1^2),T(std_{\GL_2}))$ when $n>1$ (this is Quadruple 19 with $m=1$). When $n=1$ it is self-dual.
    \item Quadruple 5 is dual to $(\GL_{2n},\GL_{n}\times\GL_n,1,T(std_{\GL_n}))$ which is Quadruple 7 with $m=n$.
    \end{itemize}
    
    \item For Quadruples 6-11, $\iota$ is the trivial orbit, and they are all polarized cases. We can use the algorithm in \cite{BSV} to compute the dual. 

    \begin{itemize}
        \item Quadruple 6 is dual to $(\GL_{2n}, \Sp_{2m}, (2n-2m, 1^{2m}), 0)$, which is   Quadruple 21.
        \item For Quadruple 7, when $m<n$, it is dual to $(\GL_{2n}, \GL_{2m+1}, (2n-2m-1,1^{2m+1}), T(\wedge^2))$, which is   Quadruple 17 (when $m=n-1$ this is just Quadruple 9). When $m=n$, it is dual to $(\GL_{2n},\GL_{2n},1,T(std\oplus \wedge^2))$, which is Quadruple 5.
        \item For Quadruple 8, the case when $m=n$ has already been considered in Quadruple 7. When $m<n$, the dual is $(\GL_{2n}, \GL_{2m}, (2n-2m,1^{2m}), T(\wedge^2))$, which is Quadruple 20.
        \item Quadruple 9 is dual to $(\GL_{2n}, \GL_{n+1}\times \GL_{n-1}, 1, T(std_{\GL_{n+1}}))$, which is Quadruple 7 with $m=n-1$.
        \item Quadruple 10 is dual to $(\GL_{2n}, \GL_n, (2^{n}), 0)$, which is   Quadruple 13.
        \item Quadruple 11 is dual to $(\GL_{2n}, \GL_n\times\GL_n, (1^{2n}), 0)$ which is Quadruple 6 with $m=n$.
    \end{itemize}

    \item For Quadruple 12, $\iota=(2^2,1^{2n-4})$, $L=\GL_2^{ 2}\times \GL_{2n-4}$, and $\Delta_{red}=(\GL_2^{ 2}\times \GL_{2n-4},\GL_{2}\times\GL_{2n-4},0,T(std_{\GL_{2}}\otimes std_{\GL_{2n-4}}))$. This is a polarized case, whose dual is $\hat\Delta_{red}=(\GL_2^{ 2}\times \GL_{2n-4},\GL_{2}^{ 2}\times \GL_{2},1^2\times(2n-6,1^2), std_{\GL_{2}}\otimes std_{\GL_{2}} \otimes std_{\GL_{2}})$. By Conjecture \ref{Whittaker induction}, $\hat\Delta$ is $(\GL_{2n},\GL_{6},(2n-6,1^6),\wedge^3)$, which is Quadruple 22.
    
    \item For Quadruples 13-14, $\iota=(2^n)$, $L=\GL_n\times \GL_n$ and $\rho_\iota$ is trivial. In these two cases, $\Delta_{red}=(\GL_n\times\GL_n,\GL_n,1,\rho_H)$ and they are both polarized. 
    \begin{itemize}
        \item For Quadruple 13, $\rho_H=0$, and $\hat\Delta_{red}=(\GL_n\times\GL_n,\GL_n,1,0)$.
        By Conjecture \ref{Whittaker induction}, $\hat\Delta=(\GL_{2n},\Sp_{2n},1,0)$, which is Quadruple 10.
        
        \item For Quadruple 14, $\rho_H=T(std_{\GL_n})$, and $\hat\Delta_{red}=(\GL_n\times\GL_n,\GL_n\times \GL_n,1,T(std_{\GL_n}\otimes std_{\GL_n}))$.
        By Conjecture \ref{Whittaker induction}, $\hat\Delta=(\GL_{2n},\GL_{2n},1,T(\wedge^2))$, which is  Quadruple 3.
    \end{itemize}
    
    \item For Quadruples 15-17, $\iota=(2n-2m-1,1^{2m+1})$, $L=\GL_{2m+2}\times \GL_1^{2n-2m-2}$ and $\rho_\iota$ is trivial. In these cases, $\Delta_{red}=(\GL_{2m+2},\GL_{2m+1},0,\rho_H)$ and are all polarized cases.
    \begin{itemize}
        \item For Quadruple 15, $\rho_H=0$, 
        and $\hat\Delta_{red}=(\GL_{2m+2},\GL_2,(2m,1^2),0)$.
        By Conjecture \ref{Whittaker induction}, $\hat\Delta=(\GL_{2n},\GL_{2n-2m},(2m,1^{2n-2m}),0)$, which is Quadruple 18.
        
        \item For Quadruple 16, $\rho_H=T(std_{\GL_{2m+1}})$,
        and $\hat\Delta_{red}=(\GL_{2m+2},\GL_3,(2m-1,1^3),T(std_{\GL_3}))$.
        By Conjecture \ref{Whittaker induction}, $\hat\Delta=(\GL_{2n},\GL_{2n-2m+1},(2m-1,1^{2n-2m+1}),T(std_{\GL_{2n-2m+1}}))$ which is just Quadruple 16 except replacing $m$ with $n-m$.
        
        \item For Quadruple 17, $\rho_H=T(\wedge^2)$,
        and $\hat\Delta_{red}=(\GL_{2m+2},\GL_{m+2}\times\GL_m,1,T(std_{\GL_{m+2}}))$.
        By Conjecture \ref{Whittaker induction}, $\hat\Delta=(\GL_{2n},\GL_{2n-m}\times \GL_m,1,T(std_{\GL_{2n-m}}))$ which is Quadruple 7.
    \end{itemize}
    
    \item For Quadruples 18-21, $\iota=(2n-2m,1^{2m})$, $L=\GL_{2m}\times \GL_1^{2n-2m}$ and $\rho_\iota=T(std_{\GL_{2m}})$. For Quadruples 18-20, $\Delta_{red}=(\GL_{2m},\GL_{2m},1,\rho_H\oplus T(std_{\GL_{2m}}))$, which are vector space cases. For Quadruple 21, $\Delta_{red}=(\GL_{2m},\Sp_{2m},1,T(std_{\Sp_{2m}}))$, which is a polarized case.
    \begin{itemize}
        \item For Quadruple 18, $\rho_H=0$,
        and $\hat\Delta_{red}=(\GL_{2m},\GL_1,(2m-1,1),0)$.
        By Conjecture \ref{Whittaker induction}, $\hat\Delta=(\GL_{2n},\GL_{2n-2m+1},(2m-1,1^{2n-2m+1}),0)$, which is Quadruple 15.
        
        \item For Quadruple 19, $\rho_H=T(std_{\GL_{2m}})$,
        and $\hat\Delta_{red}=(\GL_{2m},\GL_2,(2m-2,1^2),T(std_{\GL_2}))$.
        By Conjecture \ref{Whittaker induction}, $\hat\Delta=(\GL_{2n},\GL_{2n-2m+2},(2m-2,1^{2n-2m+2}),T(std_{\GL_{2n-2m+2}}))$ which is just Quadruple 16 except replacing $m$ with $n-m+1$.
        
        \item For Quadruple 20, $\rho_H=T(\wedge^2)$,
        and $\hat\Delta_{red}=(\GL_{2m},\GL_{m}\times\GL_m,1,T(std_{\GL_m}))$.
        By Conjecture \ref{Whittaker induction}, $\hat\Delta=(\GL_{2n},\GL_{2n-m}\times \GL_m,(2m,1^{2n-2m}),T(std_{\GL_m}))$ which is Quadruple 8.
        
        \item For Quadruple 21, $\hat\Delta_{red}=(\GL_{2m},\GL_{m}\times\GL_m,1,0)$. By Conjecture \ref{Whittaker induction}, $\hat\Delta=(\GL_{2n},\GL_{2n-m}\times \GL_m,(1^{2n}),0)$ which is Quadruple 6.
    \end{itemize}
    
    \item For Quadruple 22, $\iota=(2n-6,1^6)$, $L=\GL_6\times \GL_1^{2n-6}$, and 
    $\Delta_{red}=(\GL_6,\GL_6,1,\wedge^3\oplus T(std_{\GL_6}))$. This is a vector space case, whose dual is $\hat\Delta_{red}=(\GL_6,\GL_2\times \GL_2,(2^2,1^2),0)$. By Conjecture \ref{Whittaker induction}, $\hat\Delta=(\GL_{2n},\GL_2\times \GL_{2n-4},(2^2,1^{2n-4}),0)$, which is   Quadruple 12.
    
    \item For Quadruple 23, this is the Whittaker model case and it is dual to $(\GL_{2n},\GL_{2n},1,0)$ which is Quadruple 1.

    \item For Quadruple 24, it is the vector space case and by \cite{MWZ2} we know that its dual is $(\GL_6,\GL_2,(3^2),0)$ which is Quadruple 26.
    
    \item For Quadruple 25, it is the vector space case and by \cite{MWZ2} we know that its dual is $(\GL_6, \GL_2\times\GL_2, (2^2,1^2),0)$ which is Quadruple 12 with $n=3$.
    
    \item For Quadruple 26, $\iota=(3^2)$, $L=\GL_2^{ 3}$, and  $\Delta_{red}=(\GL_2^{ 3},\GL_2,1,0)$. This is a polarized case and its dual is $\hat\Delta_{red}=(\GL_2^{ 3},\GL_2^{ 3},T(std_{\GL_2}\otimes std_{\GL_2}\otimes std_{\GL_2}))$.
    By Conjecture \ref{Whittaker induction}, $\hat\Delta=(\GL_{6},\GL_6,(1^6),\wedge^3)$, which is Quadruple 24.

     \item For Quadruple 27, our current method cannot be used to compute its dual. However, since this is the only remaining case, we believe it is self-dual \footnote{Among all the quadruples with $G$ simple, this is the only case where we cannot provide any evidence for the duality.}.
\end{itemize}

We summarize our results for Type $A$ in Table \ref{Table A} \footnote{we combine the even and odd cases into one table}.

\begin{rmk}
By a similar argument as in this section, we can also classify the quadruples and compute their dual in Type $B$, $C$, $D$ cases. We will skip the details and only summarize our results in Table \ref{Table B}, \ref{Table C} and \ref{Table D}. We refer the reader to the Ph.D. thesis \cite{T} of the first author for details of those cases.
\end{rmk}

\section{Type $E_8$ case} \label{sec:typeE8}
\subsection{The Classification}
The SLA package \cite{GT} in GAP provides a representative of each nilpotent orbit in $E_8$, which we use to compute the Levi subalgebra $\Fl$, $\Fg_\iota$, and $\Fu/\Fu^+$. In the table below, the orbits are listed by their Bala-Carter labels, with the Levi-spherical ones marked by a bullet on its left. In certain cases, underlines indicate how $\Fh_0$ is embedded into $\Fl$.

\begin{table}[htbp]
\centering
\begin{minipage}[t]{0.45\textwidth}
\centering
\begin{tabular}[t]{cl|cc}
\hline
 & $\iota$ & $\mathfrak{[l,l]}$ & $\Fg_\iota$\\
\hline 
$\bullet$  & $1$ & $\mathfrak{e}_{8}$  & $\mathfrak{e}_{8}$ \\
$\bullet$  & $A_{1}$ & $\mathfrak{e}_{7}$  & $\mathfrak{e}_{7}$ \\
$\bullet$  & $2A_{1}$ & $\Fso_{14}$  & $\Fso_{13}$ \\
$\bullet$  & $3A_{1}$ & $\mathfrak{e}_{6}\oplus\Fsl_{2}$  & $\mathfrak{f}_{4}\oplus\Fsl_{2}$ \\
 & $A_{2}$ & $\mathfrak{e}_{7}$  & $\mathfrak{e}_{6}$\\
$\bullet$  & $4A_{1}$ & $\Fsl_{8}$  & $\Fsp_{8}$\\
$\bullet$  & $A_{2}+A_{1}$ & $\Fso_{12}$  & $\Fsl_{6}$ \\
 & $A_{2}+2A_{1}$ & $\Fso_{10}\oplus\Fsl_{3}$  & $\Fspin_{7}\oplus\Fso_{3}$ \\
$\bullet$  & $A_{3}$ & $\Fso_{12}$  & $\Fso_{\ensuremath{11}}$ \\
 & $A_{3}$ & $\Fsl_{7}\oplus\Fsl_{2}$  & $\mathfrak{g}_{2}\oplus\Fsl_{2}$ \\
 & $2A_{2}$ & $\Fso_{14}$  & $\mathfrak{g}_{2}\oplus\mathfrak{g}_{2}$ \\
 & $2A_{2}+A_{1}$ & $\Fso_{10}\oplus\Fsl_{2}$  & $\mathfrak{g}_{2}\oplus\Fsl_{2}$ \\
 & $A_{3}+A_{1}$ & $\Fso_{10}\oplus\Fsl_{2}$  & $\Fspin_{7}\oplus\Fsl_{2}$ \\
 & $D_{4}(a_{1})$ & $\mathfrak{e}_{6}\oplus\Fsl_{2}$  & $\Fso_{8}$ \\
$\bullet$  & $D_{4}$ & $\mathfrak{e}_{6}$ & $\mathfrak{f}_{4}$ \\
 & $2A_{2}+2A_{1}$ & $\Fsl_{4}\oplus\Fsl_{5}$  & $\Fsp_{4}$ \\
 & $A_{3}+2A_{1}$ & $\Fsl_{6}\oplus\Fsl_{2}$  & $\Fsp_{4}\oplus\Fsl_{2}$ \\
 & $D_{4}(a_{1})+A_{1}$ & $\Fsl_{6}\oplus\Fsl_{2}$  & $\Fsl_{2}^{\oplus3}$ \\
 & $A_{3}+A_{2}$ & $\Fso_{8}\oplus\Fsl_{3}$  & $\Fsp_{4}\oplus\Ft^{1}$ \\
 & $A_{4}$ & $\Fso_{12}$  & $\Fsl_{5}$ \\
 & $A_{3}+A_{2}+A_{1}$ & $\Fsl_{5}\oplus\Fsl_{3}\oplus\Fsl_{2}$  & $\Fsl_{2}\oplus\Fsl_{2}$ \\
$\bullet$  & $D_{4}+A_{1}$ & $\Fsl_{6}$  & $\Fsp_{6}$ \\
 & $D_{4}(a_{1})+A_{2}$ & $\Fsl_{8}$  & $\Fsl_{3}$ \\
 & $A_{4}+A_{1}$ & $\Fso_{8}\oplus\Fsl_{2}$  & $\Fsl_{3}\oplus\Ft^{1}$ \\
 & $2A_{3}$ & $\Fsl_{4}^{\oplus2}$  & $\Fsp_{4}$ \\
 & $D_{5}(a_{1})$ & $\Fso_{8}\oplus\Fsl_{2}$  & $\Fsl_{4}$ \\
 & $A_{4}+2A_{1}$ & $\Fsl_{3}\oplus\Fsl_{4}\oplus\Fsl_{2}$  & $\Fsl_{2}\oplus\Ft^{1}$ \\
 & $A_{4}+A_{2}$ & $\Fso_{10}\oplus\Fsl_{3}$  & $\Fsl_{2}^{\oplus2}$ \\
$\bullet$  & $A_{5}$ & $\Fso_{8}\oplus\Fsl_{2}$  & $\Fg_2\oplus\Fsl_{2}$ \\
 & $D_{5}(a_{1})+A_{1}$ & $\Fsl_{4}\oplus\Fsl_{2}\oplus\Fsl_{3}$  & $\Fsl_{2}^{\oplus2}$ \\
 & $A_{4}+A_{2}+A_{1}$ & $\Fsl_{4}\oplus\Fsl_{3}\oplus\Fsl_{2}$  & $\Fsl_{2}$ \\
 & $D_{4}+A_{2}$ & $\Fsl_{7}$  & $\Fsl_{3}$ \\
 & $E_{6}(a_{3})$ & $\Fso_{10}\oplus\Fsl_{2}$  & $\Fg_2$\\
 & $D_{5}$ & $\Fso_{10}$  & $\Fspin_{7}$ \\
 & $A_{4}+A_{3}$ & $\Fsl_{3}^{\oplus2}\oplus\Fsl_{2}^{\oplus2}$  & $\Fsl_{2}$ \\
\hline
\end{tabular}
\end{minipage}
\hfill
\begin{minipage}[t]{0.45\textwidth}
\centering
\begin{tabular}[t]{cl|cc}
\hline
& $\iota$ & $\mathfrak{[l,l]}$ & $\Fg_\iota$  \\
\hline 
 & $A_{5}+A_{1}$ & $\Fsl_{4}\oplus\Fsl_{2}^{\oplus2}$  & $\Fsl_{2}^{\oplus2}$ \\
 & $D_{5}(a_{1})+A_{2}$ & $\Fsl_{4}\oplus\Fsl_{2}^{\oplus2}$  & $\Fsl_{2}$ \\
 & $D_{6}(a_{2})$ & $\Fsl_{4}\oplus\Fsl_{2}^{\oplus2}$  & $\Fsl_{2}^{\oplus2}$ \\
 & $E_{6}(a_{3})+A_{1}$ & $\Fsl_{4}\oplus\Fsl_{2}^{\oplus2}$  & $\Fsl_{2}$ \\
 & $D_{7}(a_{5})$ & $\Fsl_{3}^{\oplus2}\oplus\Fsl_{2}^{\oplus2}$  & $\Fsl_{2}$ \\
 & $D_{5}+A_{1}$ & $\Fsl_{4}\oplus\Fsl_{2}$  & $\Fsl_{2}^{\oplus2}$ \\
 & $E_{8}(a_{7})$ & $\Fsl_{5}\oplus\Fsl_{4}$  & $0$\\
 & $A_{6}$ & $\Fso_{8}\oplus\Fsl_{3}$  & $\Fsl_{2}^{\oplus2}$ \\
 & $D_{6}(a_{1})$ & $\Fsl_{4}\oplus\Fsl_{2}$  & $\Fsl_{2}^{\oplus2}$ \\
 & $A_{6}+A_{1}$ & $\Fsl_{3}\oplus\Fsl_{2}^{\oplus3}$  & $\Fsl_{2}$ \\
 & $E_{7}(a_{4})$ & $\Fsl_{3}\oplus\Fsl_{2}^{\oplus3}$  & $\Fsl_{2}$ \\
 & $E_{6}(a_{1})$ & $\Fso_{8}\oplus\Fsl_{2}$  & $\Fsl_{3}$ \\
 & $D_{5}+A_{2}$ & $\Fsl_{5}\oplus\Fsl_{3}$  & $\Ft^{1}$\\
$\bullet$  & $D_{6}$ & $\Fsl_{4}$  & $\Fsp_{4}$ \\
$\bullet$  & $E_{6}$ & $\Fso_{8}$  & $\Fg_2$\\
 & $D_{7}(a_{2})$ & $\Fsl_{2}^{\oplus4}$  & $\Ft^{1}$\\
 & $A_{7}$ & $\Fsl_{2}^{\oplus4}$  & $\Fsl_{2}$ \\
 & $E_{6}(a_{1})+A_{1}$ & $\Fsl_{2}^{\oplus4}$  & $\Ft^{1}$\\
 & $E_{7}(a_{3})$ & $\Fsl_{2}^{\oplus4}$  & $\Fsl_{2}$ \\
 & $E_{8}(b_{6})$ & $\Fsl_{4}\oplus\Fsl_{3}\oplus\Fsl_{2}$  & $0$\\
 & $D_{7}(a_{1})$ & $\Fsl_{4}\oplus\Fsl_{3}$  & $\Ft^{1}$\\
 & $E_{6}+A_{1}$ & $\Fsl_{2}\oplus\underline{\Fsl_{2}}\oplus\underline{\Fsl_{2}}$  & $\underline{\Fsl_{2}}$ \\
 & $E_{7}(a_{2})$ & $\Fsl_{2}\oplus\Fsl_{2}\oplus\underline{\Fsl_{2}}$  & $\underline{\Fsl_{2}}$ \\
 & $E_{8}(a_{6})$ & $\Fsl_{3}^{\oplus2}\oplus\Fsl_{2}^{\oplus2}$  & $0$\\
$\bullet$ & $D_{7}$ & $\underline{\Fsl_{2}}\oplus\underline{\Fsl_{2}}$  & $\underline{\Fsl_{2}}$ \\
 & $E_{8}(b_{5})$ & $\Fsl_{3}\oplus\Fsl_{2}\oplus\Fsl_{3}$  & $0$\\
 & $E_{7}(a_{1})$ & $\Fsl_{2}\oplus\underline{\Fsl_{2}}$  & $\underline{\Fsl_{2}}$ \\
 & $E_{8}(a_{5})$ & $\Fsl_{3}\oplus\Fsl_{2}^{\oplus3}$  & $0$\\
 & $E_{8}(b_{4})$ & $\Fsl_{3}\oplus\Fsl_{2}^{\oplus2}$  & $0$\\
$\bullet$  & $E_{7}$ & $\Fsl_{2}$  & $\Fsl_{2}$ \\
 & $E_{8}(a_{4})$ & $\Fsl_{2}^{\oplus4}$  & $0$\\
 & $E_{8}(a_{3})$ & $\Fsl_{2}^{\oplus3}$  & $0$\\
 & $E_{8}(a_{2})$ & $\Fsl_{2}^{\oplus2}$  & $0$\\
 & $E_{8}(a_{1})$ & $\Fsl_{2}$  & $0$\\
$\bullet$  & $E_{8}$ & $0$ & $0$\\
\hline 
\end{tabular}
\end{minipage}
\label{Table E_8 Levi-spherical}
\caption{The sphericality of nilpotent orbits of $E_8$.}
\end{table}

For Levi-spherical orbits, we check whether $\Fu/\Fu^+$ is a subrepresentation of an anomaly-free coisotropic representation of $\Fg_\iota$ and $\Fg_\iota'$, where $\Fg_\iota'$ is the Lie algebra of the stabilizer (in $G_\iota$) of a generic point of $\Fg_{\iota}^{\perp}$. We leave the last column empty when $\Fu/\Fu^+$ does not satisfy the condition for $\Fg_\iota$. 
Same as the previous table, we put a bullet on the left of those orbits that satisfy the condition for both $\Fg_\iota$ and $\Fg_\iota'$.

\begin{table}[htbp]
\centering
\begin{tabular}{ll|ccc|cc}
\hline 
 & $\iota$ & $\mathfrak{[l,\mathfrak{l}]}$ & $\Fg_\iota$ & $\Fg_\iota'$ & $\Fu/\Fu^+|_{\Fg_\iota}$ & $\Fu/\Fu^+|_{\Fg_\iota'}$\\
\hline 
$\bullet$ & $1$ & $\mathfrak{e}_{8}$ & $\mathfrak{e}_{8}$ & $\mathfrak{e}_{8}$ & $0$ & $0$\\
$\bullet$ & $A_{1}$ & $\mathfrak{e}_{7}$ & $\Fe_7$ & $\mathfrak{e}_{7}$ & $std_{\mathfrak{e}_{7}}$ & $std_{\mathfrak{e}_{7}}$\\
 & $2A_{1}$ & $\Fso_{14}$ & $\Fso_{13}$ & $\Fso_{12}$ & $\Fspin_{13}$ & \\
 & $3A_{1}$ & $\mathfrak{e}_{6}\oplus\Fsl_{2}$ & $\Ff_{4}\oplus\Fsl_{2}$ & $\Fso_{8}\oplus\Fsl_{2}$ & $\mathfrak{f}_{4}\otimes\Fsl_{2}\oplus\Fsl_{2}$ & \\
 & $4A_{1}$ & $\Fsl_{8}$ & $\Fsp_{8}$ & $\Fsl_{2}^{\oplus4}$ & $std\oplus\wedge^{3}$ & \\
 & $A_{2}+A_{1}$ & $\Fso_{12}$ & $\Fsl_{6}$ & $\Fsl_{2}^{\oplus3}$ & $T(std)^{\oplus2}\oplus\wedge^{3}$ & \\
 & $A_{3}$ & $\Fso_{12}$ & $\Fso_{\ensuremath{11}}$ & $\Fso_{10}$ & $\Fspin_{11}$ & \\
$\bullet$ & $D_{4}$ & $\mathfrak{e}_{6}$ & $\mathfrak{f}_{4}$ & $\Fso_{8}$ & $0$ & $0$\\
 & $D_{4}+A_{1}$ & $\Fsl_{6}$ & $\Fsp_{6}$ & $\Fsl_{2}^{\oplus3}$ & $std\oplus\wedge^{3}$ & \\
 & $A_{5}$ & $\Fso_{8}\oplus\Fsl_{2}$ & $\Fg_2\oplus\Fsl_{2}$ & $\Fsl_{2}\oplus\Fsl_{2}$ & $\Fg_2\otimes\Fsl_{2}\oplus\Fsl_{2}$ & \\
 & $D_{6}$ & $\Fsl_{4}$ & $\Fsp_{4}$ & $\Fsl_{2}\oplus\Fsl_{2}$ & $\Fsp_{4}^{\oplus3}$ & \\
$\bullet$ & $E_{6}$ & $\Fso_{8}$ & $\Fg_2$ & $\Fsl_{2}$ & $0$ & $0$\\
 & $D_{7}$ & $\Fsl_{2}\oplus\Fsl_{2}$ & $\Fsl_{2}$ & $\Ft^{2}$ & $std^{\oplus5}$ & \\
 & $E_{7}$ & $\Fsl_{2}$ & $\Fsl_{2}$ & $\Fsl_{2}$ & $std^{\oplus3}$ &  \\
$\bullet$ & $E_{8}$ & $0$ & $0$ & $0$ & $0$ & $0$\\
\hline 
\end{tabular}

\label{Table E_8 coisotropic}
\caption{The multiplicity-freeness of Levi-spherical nilpotent orbits of $E_8$.}
\end{table}

\begin{rmk}
When $\iota=2A_1$, $A_3$, or $E_7$, the representation $\Fu/\Fu^+$ needs an extra $\Fgl_1$ action to be coisotropic as a representation of $\Fg_\iota'$.
\end{rmk}

Next, we study the remaining 5 orbits. 

\begin{itemize}
    \item When $\iota=1$, $H$ can be any reductive spherical subgroup of $E_8$, which are $E_8,E_7\times \SL_2$ or $\SO_{16}$. However, when $H=E_7\times \SL_2$ or $\SO_{16}$, the corresponding spherical variety has Type N spherical root, as defined in \cite{BP}, and hence the generic stabilizer of the Hamiltonian space is not connected. Hence $H$ has to be $E_8$. Since there is no nontrivial multiplicity-free symplectic representation of $E_8$, we know that $\rho_H$ must be $0$.
    \item When $\iota=A_1$, $L$ and $G_\iota$ are both of Type $E_7$ and $\Fu/\Fu^+$ is the standard representation of $E_7$. In this case, $H$ must be a reductive spherical subgroup of $E_7$, which is of Type $E_7,E_6,A_1\times D_6$ or $A_7$. When $H$ is of Type $E_6, A_1\times D_6$ or $A_7$, the restriction of the standard representation of $E_7$ to the generic stabilizer $H'$ is not coisotropic. Hence $H$ has to be equal to $G_\iota$. In this case, since the only no nontrivial multiplicity-free symplectic representation of $E_7$ is the standard representation, we must have $\rho_H=0$.
    \item When $\iota=D_4$, $L$ is of Type $E_6$ and $G_\iota$ is of Type $F_4$. Then $H$ has to equal to $G_\iota$ since $E_6$ does not have any reductive spherical subgroup that is a proper subgroup of $F_4$. Moreover, since there is no nontrivial multiplicity-free symplectic representation of $F_4$, we know that $\rho_H$ must be $0$.
    \item When $\iota=E_6$, $L$ is of Type $D_4$ and $G_\iota$ is of Type $G_2$. Then $H$ has to equal to $G_\iota$ since $\SO_8$ does not have any reductive spherical subgroup that is a proper subgroup of $G_2$. As for $\rho_H$, the only multiplicity-free symplectic representation of $G_2$ are $0$ and $T(std_{G_2})$. As the restriction of $T(std_{G_2})$ to the generic stabilizer $G_\iota'=\SL_2$ is not coisotropic, we know that $\rho_H$ must be $0$.
    \item When $\iota=E_8$ is the regular orbit, it is easy to see that $H$ has to be trivial and $\rho_H$ has to be $0$.
\end{itemize}

Summarizing, we get the following table of anomaly-free hyperspherical BZSV quadruples for $E_8$.

\begin{table}[htbp]
\centering
\begin{tabular}{|c|c|c|c|c|c|}
\hline 
\textnumero & $\Delta$ & $[L,L]$ & $H'$ & $\Fu/\Fu^{+}|_H$ & $\rho_{H,\iota}|_{H'}$ \\
\hline 
1 & $(E_{8},E_{8},0,0)$ & $E_{8}$ & $E_{8}$ & $1$ & $0$ \\
\hline 
2 & $(E_{8},E_{7},A_{1}, 0)$ & $E_{7}$ & $E_{7}$ & $std_{E_7}$ & $std_{E_7}$ \\
\hline 
3 & $(E_{8},F_{4},D_{4},0)$ & $E_{6}$ & $\SO_{8}$ & $0$ & $0$ \\
\hline 
4 & $(E_{8},G_{2},E_{6},0)$ & $\SO_{8}$ & $\SL_{2}$ & $0$ & $0$ \\
\hline 
5 & $(E_{8},0,E_{8},0)$ & $0$ & $0$ & $0$ & $0$ \\
\hline 
\end{tabular}
\caption{Type $E_8$}
\label{Table Type E_8}
\end{table}

\subsection{The Dual}
We consider the dual for each quadruple:

\begin{itemize}
    \item For Quadruple 1, this is the trivial orbit case. The dual is the Quadruple 5.
    \item For Quadruple 2, $\Delta_{red} = (E_7, E_7, 0, std)$ (which is a vector space), whose dual is $\hat\Delta_{red}=(E_7,\SL_2,E_6,0)$. By Conjecture \ref{Whittaker induction}, the dual quadruple is Quadruple 4.
    \item For Quadruple 3, $\Delta_{red} = (E_6, F_4, 0, 0)$ (which is a polarized case), whose dual is $\hat\Delta_{red}=(E_6,\SL_3,D_4,0)$. By Conjecture \ref{Whittaker induction}, the dual quadruple is itself
    \item For Quadruple 4, $\Delta_{red} = (\SO_8,G_2,0,0)$ (which is a polarized case), whose dual is $\hat\Delta_{red}=(\SO_8,\SO_4\times \Sp_2,(2^2,1^4),0)$. By Conjecture \ref{Whittaker induction}, the dual quadruple is Quadruple 2.
    \item For Quadruple 5, this is the Whittaker model case. The dual is  Quadruple 1.
\end{itemize}

We summarize our results for $E_8$ in Table \ref{Table E8}.

\begin{rmk}
By a similar argument as in this section, we can also classify the quadruples and compute their dual in Type $F_4$, $E_6$, $E_7$ cases. We will skip the details and only summarize our results in Table \ref{Table F_4}, \ref{Table E_6} and \ref{Table E_7}. We refer the reader to the PhD thesis of the first author for details of those cases.
\end{rmk}

\section{Tables} \label{sec:tables}
We summarize our finding in 10 tables, each of them corresponds to one of the following root types: $B_2=C_2,G_2,F_4,E_6,E_7,E_8,A_n,B_n,C_n,D_n$.  For Type $A$, $B_2=C_2$, $D$, $E$, $F$, $G$, each quadruple will appear twice in the table, one on  $\Delta$-side and one on $\hat{\Delta}$-side. For Type $B$ (resp. Type $C$), each quadruple will appear once in the table of Type $B$ (resp. Type $C$) on the $\Delta$-side and once in the Table of Type $C$ (resp. Type $B$) on the $\hat{\Delta}$-side.

\begin{rmk} In some cases, one needs to use groups like $\Spin,\GSO,\GSp,\SL$ instead of $\SO,\Sp,\GL$. Here for simplicity we are ignoring this. The groups in the table has the same root type as the ``correct" choice, they may be differed by a finite isogeny and/or some $\GL_1$-factors. Also in Type D, everything may be differed by the outer automorphism.

The two ``type" columns indicate the evidence for duality: The first column corresponds to the duality from $\Delta$ to $\hat{\Delta}$, while the second applies to the reverse. We use the following notation:
\begin{enumerate}
    \item P: if the dual quadruple is determined by the algorithm of polarized cases in \cite{BSV};
    \item V: if the quadruple is a vector space case. Its dual is given in \cite{MWZ1};
    \item W: if the dual quadruple is verified through Conjecture \ref{Whittaker induction} (i.e. it is the Whittaker induction of a polarized reductive quadruple/vector space).
    \item $\ast$: this only happens in Type D, for those cases we need to use some other argument to compute the dual of $\Delta_{red}$ and then use Conjecture \ref{Whittaker induction} to compute the dual of $\Delta$. We refer the reader to the last part of this section for details.
    \item ?: we do not have any evidence \footnote{the only case where both ``type" columns are ? is the quadruple $\Delta=\hat{\Delta}=(\GL_8,\GL_6\times \GL_2,1,\wedge^{3}_{\GL_6})$, this is the only case where we do not have any evidence of the duality for both directions.}.
\end{enumerate}
\end{rmk}

\begin{figure}[h!]
	\begin{tabular}{| c | c | c | c | c |}
		\hline
		\textnumero & $\Delta=(G,H,\iota,\rho_H)$ & \multicolumn{2}{c|}{Type} & $\hat{\Delta}=(\hat{G},\hat{H}',\hat{\iota}',\rho_{\hat{H}'})$  \\
		\hline
		1 & $(\Sp_4,\Sp_4,1,0)$ & PV & W & $(\Sp_4,1,(4),0)$ \\
		\hline
        2 & $(\Sp_4,\Sp_4,1,T(std_{\Sp_4}))$ & PV & W & $(\Sp_4,\GL_1,(2^2),0)$ \\
        \hline
        3 & $(\Sp_4,\Sp_4,1,T(std_{\Sp_4}\oplus std_{\Sp_4}))$ & PV & P & $(\Sp_4,\Sp_2\times \Sp_2,1,T(std_{\SL_2,1}\oplus std_{\SL_2,2}))$ \\
        \hline
        4 & $(\Sp_4,\GL_1\times \Sp_2,1,0)$ & P & P & $(\Sp_4,\Sp_2\times \Sp_2,1,T(std_{\SL_2}))$ \\
        \hline
        5 & $(\Sp_4,\Sp_2\times \Sp_2,1,0)$ & P & W & $(\Sp_4,\SL_2,(2,1^2),std)$ \\
        \hline
        6 & $(\Sp_4,\Sp_2\times \Sp_2,1,T(std_{\SL_2}))$ & P & P & $(\Sp_4,\GL_1\times \Sp_2,1,0)$ \\
        \hline
        7 & $(\Sp_4,\Sp_2\times \Sp_2,1,T(std_{\SL_2,1}\oplus std_{\SL_2,2}))$ & P & PV & $(\Sp_4,\Sp_4,1,T(std_{\Sp_4}\oplus std_{\Sp_4}))$ \\
        \hline
        8 & $(\Sp_4,\Sp_2\times \Sp_2,1,T(std_{\SL_2,2}\oplus std_{\SL_2,2}))$ & P & P & self-dual \\
        \hline
        9 & $(\Sp_4,1,(4),0)$ & W & PV & $(\Sp_4,\Sp_4,1,0)$ \\
        \hline
        10 & $(\Sp_4,\GL_1,(2^2),0)$ & W & PV & $(\Sp_4,\Sp_4,1,T(std_{\Sp_4}))$ \\
        \hline
        11 & $(\Sp_4,\SL_2,(2,1^2),std)$ & W & P & $(\Sp_4,\Sp_2\times \Sp_2,1,0)$ \\
        \hline
	\end{tabular}
	\captionof{table}{Type $B_2=C_2$}
    \label{Table B_2 2}
\end{figure}

\begin{figure}[h!]
	\begin{tabular}{| c | c | c | c | c |}
		\hline
		\textnumero & $\Delta=(G,H,\iota,\rho_H)$ & \multicolumn{2}{c|}{Type} & $\hat{\Delta}=(\hat{G},\hat{H}',\hat{\iota}',\rho_{\hat{H}'})$  \\
		\hline
		1 & $(G_2,G_2,1,0)$ & PV & W & $(G_2,1,G_2,0)$ \\
		\hline
        2 & $(G_2,\SL_3,1,0)$ & P & W & $(G_2, \SL_2, A_1, std)$ \\
        \hline 
        3 & $(G_2, \SL_2, A_1, std)$ & W & P & $(G_2,\SL_3,1,0)$ \\
        \hline
        4 & $(G_2,1,G_2,0)$ & W & PV & $(G_2,G_2,1,0)$ \\
       \hline
	\end{tabular}
	\captionof{table}{Type $G_2$}
    \label{Table G_2 2}
\end{figure}

\begin{figure}[h!]
	\begin{tabular}{| c | c | c | c | c |}
		\hline
		\textnumero & $\Delta=(G,H,\iota,\rho_H)$ & \multicolumn{2}{c|}{Type} & $\hat{\Delta}=(\hat{G},\hat{H}',\hat{\iota}',\rho_{\hat{H}'})$  \\
		\hline
		1 & $(F_4,F_4,1,0)$ & PV & W & $(F_4,1,F_4,0)$ \\
		\hline
        2 & $(F_4,\Spin_9,1,0)$ & P & W & $(F_4,\SL_2,C_3,0)$ \\
        \hline
        3 & $(F_4,\Sp_6,A_1,std)$ & W & W & self-dual \\
        \hline 
        4 & $(F_4,\Spin_6,A_1,0)$ & W & W & self-dual \\
        \hline
        5 & $(F_4,G_2,A_2,0)$ & W & W & self-dual \\
        \hline
        6 & $(F_4,\SL_2,C_3,0)$ & W & P & $(F_4,\Spin_9,1,0)$ \\
        \hline
        7 & $(F_4,1,F_4,0)$ & W & PV & $(F_4,F_4,1,0)$ \\
        \hline
	\end{tabular}
	\captionof{table}{Type $F_4$}
    \label{Table F_4}
\end{figure}

\clearpage

\begin{figure}[h!]
	\begin{tabular}{| c | c | c | c | c |}
		\hline
		\textnumero & $\Delta=(G,H,\iota,\rho_H)$ & \multicolumn{2}{c|}{Type} & $\hat{\Delta}=(\hat{G},\hat{H}',\hat{\iota}',\rho_{\hat{H}'})$  \\
		\hline
		1 & $(E_6,E_6,1,0)$ & PV & W & $(E_6,1,E_6,0)$ \\
		\hline
        2 & $(E_6,E_6,1,T(std))$ & PV & W & $(E_6,\SL_3,D_4,T(std))$ \\
        \hline
        3 & $(E_6,F_4,1,0)$ & P & W & $(E_6,\SL_3,D_4,0)$ \\
        \hline
        4 & $(E_6,D_5,1,0)$ & P & W & $(E_6, \Sp_4,A_3,0)$ \\
        \hline
        5 & $(E_6,A_5,A_1,0)$ & W & W & $(E_6,G_2,A_2\times A_2,0)$ \\
        \hline
        6 & $(E_6,A_5,A_1,T(std))$ & W & W & $(E_6,\SO_7, A_1\times A_1,0)$ \\
        \hline
        7 & $(E_6,\SO_7, A_1\times A_1,0)$ & W & W & $(E_6,A_5,A_1,T(std))$ \\
        \hline
        8 & $(E_6,G_2,A_2\times A_2,0)$ & W & W & $(E_6,A_5,A_1,0)$ \\
        \hline
        9 & $(E_6, \Sp_4,A_3,0)$ & W & P & $(E_6,D_5,1,0)$ \\
        \hline
        10 & $(E_6,\SL_3,D_4,0)$ & W & P & $(E_6,F_4,1,0)$ \\
        \hline
        11 & $(E_6,\SL_3,D_4,T(std))$ & W & PV & $(E_6,E_6,1,T(std))$ \\
        \hline
        12 & $(E_6,1,E_6,0)$ & W & PV & $(E_6,E_6,1,0)$ \\
        \hline
	\end{tabular}
	\captionof{table}{Type $E_6$}
    \label{Table E_6}
\end{figure}

\begin{figure}[h!]
    \begin{tabular}{| c | c | c | c | c |}
        \hline
        \textnumero & $\Delta=(G,H,\iota,\rho_H)$ & \multicolumn{2}{c|}{Type} & $\hat{\Delta}=  (\hat{G},\hat{H}',\hat{\iota}',\rho_{\hat{H}'})$  \\
        \hline
        1 & $(E_7,E_7,1,0)$ & PV & W & $(E_7,1,E_7,0)$ \\
        \hline
        2 & $(E_7,E_7,1,std)$ & V & W & $(E_7,\PGL_2, E_6,0)$ \\
        \hline
        3 & $(E_7,E_6,1,0)$ & P & W & $(E_7,\Sp_6,D_4,T(std))$ \\
        \hline
        4 & $(E_7,D_6,A_1,0)$ & W & W & $(E_7,G_2, A_5,0)$ \\
        \hline
        5 & $(E_7,F_4,A_1\times A_1\times A_1,0)$ & W & W & $(E_7,\Sp_6,D_4,0)$ \\
        \hline
        6 & $(E_7,\Sp_6,D_4,0)$ & W & W & $(E_7,F_4,A_1\times A_1\times A_1,0)$ \\
        \hline
        7 & $(E_7,\Sp_6,D_4,T(std))$ & W & P & $(E_7,E_6,1,0)$ \\
        \hline
        8 & $(E_7,G_2, A_5,0)$ & W & W & $(E_7,D_6,A_1,0)$ \\
        \hline
        9 & $(E_7,\PGL_2, E_6,0)$ & W & V & $(E_7,E_7,1,std)$ \\
        \hline
        10 & $(E_7,1,E_7,0)$ & W & PV & $(E_7,E_7,1,0)$ \\
        \hline
    \end{tabular}
\captionof{table}{Type $E_7$}
\label{Table E_7}
\end{figure}

\begin{figure}[h!]
	\begin{tabular}{| c | c | c | c | c |}
		\hline
		\textnumero & $\Delta=(G,H,\iota,\rho_H)$ & \multicolumn{2}{c|}{Type} & $\hat{\Delta}=(\hat{G},\hat{H}',\hat{\iota}',\rho_{\hat{H}'})$  \\
		\hline
		1 & $(E_8,E_8,1,0)$ & PV & W & $(E_8,1,E_8,0)$ \\
		\hline
        2 & $(E_8,E_7, A_1,0)$ & W & W & $(E_8, G_2,E_6,0)$ \\
        \hline
        3 & $(E_8,F_4,D_4,0)$ & W & W & self-dual \\
        \hline
        4 & $(E_8, G_2,E_6,0)$ & W & W & $(E_8,E_7, A_1,0)$ \\
        \hline
        5 & $(E_8,1,E_8,0)$ & W & PV & $(E_8,E_8,1,0)$ \\
        \hline
	\end{tabular}
	\captionof{table}{Type $E_8$}
    \label{Table E8}
\end{figure}

\begin{figure}[h!]\leftskip-2cm
	\begin{tabular}{| c | c | c | c | c |}
		\hline
		\textnumero & $\Delta=(G,H,\iota,\rho_H)$ & \multicolumn{2}{c|}{Type} & $\hat{\Delta}=(\hat{G},\hat{H}',\hat{\iota}',\rho_{\hat{H}'})$  \\
		\hline
		1 & $(\GL_n,\GL_n,1,0)$ & PV & W & $(\GL_n,1,(n),0)$ \\
		\hline
        2 & $(\GL_n,\GL_n,1,T(std))$ & PV & W & $(\GL_n,\GL_1,(n-1,1),0)$ \\
        \hline
        3 & $(\GL_{2n},\GL_{2n},1,T(\wedge^2))$ & PV & W & $(\GL_{2n},\GL_n,(2^n),T(std))$ \\
        \hline
        4 & $(\GL_{2n+1},\GL_{2n+1},1,T(\wedge^2))$ & PV & W & $(\GL_{2n+1},\GL_n,(2^n,1),0)$ \\
        \hline
        5 & $(\GL_{2n},\GL_{2n},1,T(\wedge^2\oplus std))$ & PV & P & $(\GL_{2n},\GL_n\times \GL_n,1,T(std))$ \\
        \hline
        6 & $(\GL_{2n+1},\GL_{2n+1},1,T(\wedge^2\oplus std))$ & PV & P & $(\GL_{2n+1},\GL_{n+1}\times \GL_n,1,T(std_{\GL_{n+1}}))$ \\
        \hline
        7 & $(\GL_n,\GL_n,1,T(std\oplus std))$ & PV & W & $(\GL_n,\GL_2,(n-2,1^2),T(std))$ if $n>2$; self-dual if $n=2$ \\
        \hline
        8 & $(\GL_6,\GL_6,1,\wedge^3)$ & V & W & $(\GL_6,\GL_2,(3^2),0)$ \\
        \hline
        9 & $(\GL_6,\GL_6,1,\wedge^3\oplus T(std))$ & V & W & $(\GL_6,\GL_2\times \GL_2,(2^21^2),0)$ \\
        \hline
        10 & $(\GL_{2n+1},\Sp_{2n},1,0)$ & P & P & $(\GL_{2n+1},\GL_{n+1}\times \GL_n,1,0)$ \\
        \hline
        11 & $(\GL_{2n},\Sp_{2n},1,0)$ & P & W & $(\GL_{2n},\GL_n,(2^n),0)$ \\
        \hline
        12 & $(\GL_{2n},\Sp_{2n},1,T(std))$ & P & P & $(\GL_{2n},\GL_n\times \GL_n,1,0)$ \\
        \hline
        13 & $(\GL_{2n},\GL_n\times \GL_n,1,0)$ & P & P & $(\GL_{2n},\Sp_{2n},1,T(std))$ \\
        \hline
        14 & $(\GL_{a+b},\GL_a\times \GL_b,1,0),\;a<b$ & P & W & $(\GL_{a+b},\Sp_{2a},(b-a,1^{2a}),0)$ \\
        \hline
        15 & $(\GL_{2n},\GL_n\times \GL_n,1,T(std))$ & P & PV & $(\GL_{2n},\GL_{2n},1,T(\wedge^2\oplus std))$ \\
        \hline
        16 & $(\GL_{a+b},\GL_a\times \GL_b,1,T(std_{\GL_a})),\;a<b$ & P & W & $(\GL_{a+b},\GL_{2a},(b-a,1^{2a}),T(\wedge^2))$ \\
        \hline
        17 & $(\GL_{2n+1},\GL_{n+1}\times \GL_n,1,T(std_{\GL_{n+1}}))$ & P & PV & $(\GL_{2n+1},\GL_{2n+1},1,T(\wedge^2\oplus std))$ \\
        \hline
        18 & $(\GL_{a+b},\GL_a\times \GL_b,1,T(std_{\GL_b})),\;a+1<b$ & P & W & $(\GL_{a+b},\GL_{2a+1},(b-a-1,1^{2a+1}),T(\wedge^2))$ \\
        \hline
        19 & $(\GL_{2n+1},\GL_{2n},1,T(\wedge^2))$ & P & P & $(\GL_{2n+1},\GL_{n+1}\times \GL_n,1,T(std_{\GL_{n}}))$ \\
        \hline
        20 & $(\GL_{2n},\GL_{2n-1},1,T(\wedge^2))$ & P & P & $(\GL_{2n},\GL_{n+1}\times \GL_{n-1},1,T(std_{\GL_{n+1}}))$ \\
        \hline
        21 & $(\GL_8,\GL_6\times \GL_2,1,\wedge^3)$ & ? & ? & self-dual \\
        \hline
        22 & $(\GL_7,\GL_6\times \GL_1,1,\wedge^3)$ & ? & W & $(\GL_7,\GL_2\times \GL_3,(2^21^3),0)$ \\
        \hline
        23 & $(\GL_n,1,(n),0)$ & W & PV & $(\GL_n,\GL_n,1,0)$ \\
        \hline
        24 & $(\GL_{2n},\GL_n,(2^n),0)$ & W & P & $(\GL_{2n},\Sp_{2n},1,0)$ \\
        \hline
        25 & $(\GL_{2n},\GL_n,(2^n),T(std))$ & W & PV & $(\GL_{2n},\GL_{2n},1,T(\wedge^2))$ \\
        \hline
        26 & $(\GL_{2n+1},\GL_n,(2^n,1),0)$ & W & PV & $(\GL_{2n+1},\GL_{2n+1},1,T(\wedge^2))$ \\
        \hline
        27 & $(\GL_6,\GL_2,(3^2),0)$ & W & V & $(\GL_6,\GL_6,1,\wedge^3)$ \\
        \hline
        28 & $(\GL_{n+4},\GL_2\times \GL_{n},(2^2 1^n),0),n\geq 3$ & W & W & $(\GL_{n+4},\GL_6,(n-2,1^6),\wedge^3)$ \\
        \hline
        29 & $(\GL_6,\GL_2\times \GL_2,(2^21^2),0)$ & W & V & $(\GL_6,\GL_6,1,\wedge^3\oplus T(std))$ \\
        \hline
        30 & $(\GL_{n+2k},\Sp_{2k},(n,1^{2k}),0)$ & W & P & $(\GL_{n+2k},\GL_{n+k}\times \GL_k,1,0)$ \\
        \hline
        31 & $(\GL_{n+k},\GL_k,(n,1^{k}),0),k>1$ & W & W & $(\GL_{n+k},\GL_{n+1},(k-1,1^{n+1}),0)$ \\
        \hline
        32 & $(\GL_{n+1},\GL_1,(n,1),0)$ & W & PV & $(\GL_{n+1},\GL_{n+1},1,T(std))$ \\
        \hline
        33 & $(\GL_{n+2},\GL_2,(n,1^2),T(std))$ & W & PV & $(\GL_{n+2},\GL_{n+2},1,T(std\oplus std))$ \\
        \hline
        34 & $(\GL_{n+k},\GL_k,(n,1^{k}),T(std)),k>2$ & W & W & $(\GL_{n+k},\GL_{n+2},(k-2,1^{n+2}),T(std))$ \\
        \hline
        35 & $(\GL_{n+2k},\GL_{2k},(n,1^{2k}),T(\wedge^2)),k>1$ & W & P & $(\GL_{n+2k},\GL_{n+k}\times \GL_k,1,T(std_{\GL_k}))$ \\
        \hline
        36 & $(\GL_{n+2k+1},\GL_{2k+1},(n,1^{2k+1}),T(\wedge^2)),k>1$ & W & P & $(\GL_{n+2k+1},\GL_{n+k+1}\times \GL_k,1,T(std_{\GL_{n+k+1}}))$ \\
        \hline
        37 & $(\GL_{n+6},\GL_6,(n,1^{6}),\wedge^3)$ & ? & W & $(\GL_{n+6}, \GL_{n+2}\times \GL_2,(2^2 1^{n+2}),0)$ \\
        \hline
	\end{tabular}
	\captionof{table}{Type A}
    \label{Table A}
\end{figure}

\newpage

\begin{figure}[h!]\leftskip-2cm
	\begin{tabular}{| c | c | c | c | c |}
		\hline
		\textnumero & $\Delta=(G,H,\iota,\rho_H)$ & \multicolumn{2}{c|}{Type} & $\hat{\Delta}=(\hat{G},\hat{H}',\hat{\iota}',\rho_{\hat{H}'})$  \\
		\hline
		1 & $(\SO_{2n+1},\SO_{2n+1},1,0)$ & PV & W & $(\Sp_{2n},1,(2n),0)$ \\
		\hline
        2 & $(\SO_{11},\SO_{11},1,\Spin_{11})$ & V & W & $(\Sp_{10},\SL_2, (5^2),0)$ \\
        \hline
        3 & $(\SO_{13},\SO_{13},1,\Spin_{13})$ & V & W & $(\Sp_{12},\Sp_4, (3^4),0)$ \\
        \hline
        4 & $(\SO_{7},\SO_{7},1,T(\Spin_{7}))$ & PV & W & $(\Sp_6,\SL_2,(3^2),T(std))$ \\
        \hline
        5 & $(\SO_{9},\SO_{9},1,T(\Spin_{9}))$ & PV & W & $(\Sp_8,\SL_2\times \SL_2,(3^2 1^2),T(std_{\SL_2}))$ \\
        \hline
        6 & $(\SO_{4n+1},\GL_{2n},1,0)$ & P & P & $(\Sp_{4n},\Sp_{2n}\times \Sp_{2n},1,T(std_{\Sp_{2n}}))$ \\
        \hline
        7 & $(\SO_{4n+3},\GL_{2n+1},1,0)$ & P & P & $(\Sp_{4n+2},\Sp_{2n+2}\times \Sp_{2n},1,T(std_{\Sp_{2n+2}}))$ \\
        \hline
        8 & $(\SO_7,G_2,1,0)$ & P & W & $(\Sp_6,\SL_2,(3^2),0)$ \\
        \hline
        9 & $(\SO_9,\Spin_7,1,0)$ & P & W & $(\Sp_8,\SL_2\times \SL_2,(3^2 1^2),0)$ \\
        \hline
        10 & $(\SO_{2n+1},\SO_{2n},1,0)$ & P & W & $(\Sp_{2n},\SL_2,(2n-2, 1^2),std))$ \\
        \hline
        11 & $(\SO_{11},\SO_{10},1,T(HSpin_{10}))$ & P & W & $(\Sp_{10},\Sp_2\times \Sp_4,(3^21^4),T(std_{\Sp_4}))$ \\
        \hline
        12 & $(\SO_{13},\SO_{12},1,HSpin_{12})$ & ? & W & $(\Sp_{12},\SL_2\times \SL_2,(5^2 1^2),0)$ \\
        \hline
        13 & $(\SO_{9},\SO_{8},1,T(HSpin_{8}))$ & P & W & $(\Sp_8,\SL_2\times \SL_2,(3^2 1^2),T(std_{\SL_2}))$ \\
        \hline
        14 & $(\SO_{7},\SO_{6},1,T(HSpin_{6}))$ & P & W & $(\Sp_6,\GL_1\times \SL_2,(2^2 1^2),0)$ \\
        \hline
        15 & $(\SO_{7},\SO_{6},1,T(HSpin_{6}\oplus HSpin_6)$ & P & P & $(\Sp_6,\Sp_4\times \Sp_2,1,T(std_{\Sp_4}\oplus std_{\Sp_2}))$ \\
        \hline
        16 & $(\SO_{2n+1},1,(2n+1),0)$ & W & PV & $(\Sp_{2n},\Sp_{2n},1,0)$ \\
        \hline
        17 & $(\SO_{4n+1},\Sp_{2n},(2^{2n} 1),std)$ & W & P & $(\Sp_{4n},\Sp_{2n}\times \Sp_{2n},1,0)$ \\
        \hline
        18 & $(\SO_{4n+2m+1},\Sp_{2n},(2^{2n}, 2m+1),0),\;m>0$ & W & P & $(\Sp_{4n+2m},\Sp_{2n}\times \Sp_{2n+2m},1,0)$ \\
        \hline
        19 & $(\SO_{2m+3},\SO_{2},(2m+1,1^{2}),0)$ & W & PV & $(\Sp_{2m+2},\Sp_{2m+2},1,T(std))$ \\
        \hline
        20 & $(\SO_{2n+2m+1},\SO_{2n},(2m+1,1^{2n}),0),\;n>1$ & W & W & $(\Sp_{2n+2m},\Sp_{2m+2},(2n-2,1^{2m+2}),std)$ \\
        \hline
        21 & $(\SO_{2n+11},\SO_{10},(2n+1,1^{10}),T(HSpin_{10}))$ & W & W & $(\Sp_{2n+10},\Sp_2\times \Sp_{2n+4},(3^2 1^{2n+4}),T(std_{\Sp_{2n+4}}))$ \\
        \hline
        22 & $(\SO_{2n+9},\SO_{8},(2n+1,1^{8}),T(HSpin_{8}))$ & W & W & $(\Sp_{2n+8},\Sp_2\times \Sp_{2n+2},(3^2 1^{2n+2}),T(std_{\Sp_{2}}))$ \\
        \hline
        23 & $(\SO_{2n+13},\SO_{12},(2n+1,1^{12}),HSpin_{12})$ & ? & W & $(\Sp_{2n+12},\Sp_2\times \Sp_{2n+2},(5^2 1^{2n+2}),0)$ \\
        \hline
        24 & $(\SO_{2n+5},\SO_{4},(2n+1,1^{4}),T(HSpin_{4}))$ & W & P & $(\Sp_{2n+4},\GL_1\times \Sp_{2n+2},1,0)$ \\
        \hline
        25 & $(\SO_{2n+5},\SO_{4},(2n+1,1^{4}),T(HSpin_{4}^{+}\oplus HSpin_{4}^{-}))$ & W & PV & $(\Sp_{2n+4},\Sp_{2n+4},1,T(std\oplus std))$ \\
        \hline
        26 & $(\SO_{2n+5},\SO_{4},(2n+1,1^{4}),T(HSpin_{4}^{+}\oplus HSpin_{4}^{+}))$ & W & P & $(\Sp_{2n+4},\Sp_{2n+2}\times \Sp_2,1,T(std_{\Sp_{2}\oplus std_{\Sp_2}}))$ \\
        \hline
        27 & $(\SO_{2n+7},\SO_{6},(2n+1,1^{6}),T(HSpin_{6}))$ & W & W & $(\Sp_{2n+6},\GL_1\times \Sp_{2n+2},(2^2 1^{2n+2}),0)$ \\
        \hline
        28 & $(\SO_{2n+7},\SO_{6},(2n+1,1^{6}),T(HSpin_{6}\oplus HSpin_6))$ & W & P & $(\Sp_{2n+6},\Sp_{2n+4}\times \Sp_2,1,T(std_{\Sp_{2n+4}\oplus std_{\Sp_2}}))$ \\
        \hline
        29 & $(\SO_{4n+2m+1},\GL_{2n},(2m+1,1^{4n}),0)$ & W & P & $(\Sp_{4n+2m},\Sp_{2n+2m}\times \Sp_{2n},1,T(std_{\Sp_{2n}}))$ \\
        \hline
        30 & $(\SO_{4n+2m+3},\GL_{2n+1},(2m+1,1^{4n+2}),0)$ & W & P & $(\Sp_{4n+2m+2},\Sp_{2n+2m+2}\times \Sp_{2n},1,T(std_{\Sp_{2n+2m+2}}))$ \\
        \hline
        31 & $(\SO_{2n+9},\Spin_7,(2n+1,1^8),0)$ & W & W & $(\Sp_{2n+8},\SL_2\times \Sp_{2n+2},(3^2 1^{2n+2}),0)$ \\
        \hline
        32 & $(\SO_7,\SL_2\times \SO_3,(2^2 1^3),std_{\SL_2})$ & W & V & $(\Sp_6,\Sp_6,1,\wedge^3\oplus std)$ \\
        \hline
        33 & $(\SO_{2n+5},\SL_2\times \SO_{2n+1}, (2^2 1^{2n+1}),std_{\SL_2}),\;n>1$ & W & W & $(\Sp_{2n+4},\Sp_6,(2n-2,1^6),\wedge^3)$ \\
        \hline
	\end{tabular}
	\captionof{table}{Type $B_n$ for $n>2$}
    \label{Table B}
\end{figure}

\newpage

\begin{figure}[h!]\leftskip-2cm
	\begin{tabular}{| c | c | c | c | c |}
		\hline
		\textnumero & $\Delta=(G,H,\iota,\rho_H)$ & \multicolumn{2}{c|}{Type} & $\hat{\Delta}=(\hat{G},\hat{H}',\hat{\iota}',\rho_{\hat{H}'})$  \\
		\hline
		1 & $(\Sp_{2n},\Sp_{2n},1,0)$ & PV & W & $(\SO_{2n+1},1,(2n+1),0)$ \\
		\hline
        2 & $(\Sp_{2n},\Sp_{2n},1,T(std))$ & PV & W & $(\SO_{2n+1},\SO_2,(2n-1,1^2),0)$ \\
        \hline
        3 & $(\Sp_{2n},\Sp_{2n},1,T(std\oplus std))$ & PV & W & $(\SO_{2n+1},\SO_4,(2n-3,1^4),T(HSpin_{4}^{+}\oplus HSpin_{4}^{-}))$ \\
        \hline
        4 & $(\Sp_{6},\Sp_{6},1,\wedge^3\oplus std)$ & V & W & $(\SO_7,\Sp_2\times \SO_3,(2^21^3),std_{\Sp_2})$ \\
        \hline
        5 & $(\Sp_{2n},\GL_1\times \Sp_{2n-2},1,0)$ & P & W & $(\SO_{2n+1},\SO_4,(2n-3,1^4),T(HSpin_4))$ \\
        \hline
        6 & $(\Sp_{4n},\Sp_{2n}\times \Sp_{2n},1,0)$ & P & W & $(\SO_{4n+1},\Sp_{2n},(2^{2n}1),std)$ \\
        \hline
        7 & $(\Sp_{4n},\Sp_{2n}\times \Sp_{2n},1,T(std))$ & P & P & $(\SO_{4n+1},\GL_{2n},1,0)$ \\
        \hline
        8 & $(\Sp_{2n},\Sp_{2m}\times \Sp_{2n-2m},1,0),\;n>2m$ & P & W & $(\SO_{2n+1},\Sp_{2m},(2^{2m},2n-2m+1),0)$ \\
        \hline
        9 & $(\Sp_{2n},\Sp_{2m}\times \Sp_{2n-2m},1,T(std_{\Sp_{2m}})),\;n>2m$ & P & W & $(\SO_{2n+1},\GL_{2m},(2n-4m+1,1^{4m}),0)$ \\
        \hline
        10 & $(\Sp_{2n},\Sp_{2m}\times \Sp_{2n-2m},1,T(std_{2n-2m})),\;n>2m$ & P & W & $(\SO_{2n+1},\GL_{2m+1},(2n-4m-1,1^{4m+2}),0)$ \\
        \hline
        11 & $(\Sp_{2n},\Sp_{2}\times \Sp_{2n-2},1,T(std_{\Sp_2}\oplus std_{\Sp_2}))$ & P & W & $(\SO_{2n+1},\SO_4,(2n-3,1^4),T(HSpin_{4}^{+}\oplus HSpin_{4}^{+}))$ \\
        \hline
        12 & $(\Sp_{2n},\Sp_{2}\times \Sp_{2n-2},1,T(std_{\Sp_2}\oplus std_{\Sp_{2n-2}}))$ & P & W & $(\SO_{2n+1},\SO_6,(2n-5,1^6),T(HSpin_6\oplus HSpin_6))$ \\
        \hline
        13 & $(\Sp_{2n},1,(2n),0)$ & W & PV & $(\SO_{2n+1},\SO_{2n+1},1,0)$ \\
        \hline
        14 & $(\Sp_{12},\Sp_4, (3^4),0)$ & W & V & $(\SO_{13},\SO_{13},1,\Spin_{13})$ \\
        \hline
        15 & $(\Sp_{2n+4},\GL_1\times \Sp_{2n},(2^21^{2n}),0)$ & W & W & $(\SO_{2n+5},\SO_6,(2n-1,1^6),T(HSpin_6))$ \\
        \hline
        16 & $(\Sp_{10},\SL_2, (5^2),0)$ & W & V & $(\SO_{11},\SO_{11},1,\Spin_{11})$ \\
        \hline
        17 & $(\Sp_{2n+10},\Sp_2\times \Sp_{2n},(5^2 1^{2n}),0)$ & W & ? & $(\SO_{2n+11},\SO_{12}, (2n-1,1^{12}),HSpin_{12})$ \\
        \hline
        18 & $(\Sp_6,\SL_2,(3^2),0)$ & W & P & $(\SO_7,G_2,1,0)$ \\
        \hline
        19 & $(\Sp_6,\SL_2,(3^2),T(std))$ & W & PV & $(\SO_{7},\SO_{7},1,T(\Spin_{7}))$ \\
        \hline
        20 & $(\Sp_{2n+6},\Sp_2\times \Sp_{2n},(3^2 1^{2n}),0)$ & W & W & $(\SO_{2n+7},\Spin_7,(2n-1,1^8),0)$ \\
        \hline
        21 & $(\Sp_{2n+6},\Sp_2\times \Sp_{2n},(3^2 1^{2n}),T(std_{\Sp_2}))$ & W & W & $(\SO_{2n+7},\SO_8,(2n-1,1^8),T(HSpin_8))$ \\
        \hline
        22 & $(\Sp_{2n+6},\Sp_2\times \Sp_{2n},(3^2 1^{2n}),T(std_{\Sp_{2n}})),\;n>1$ & W & W & $(\SO_{2n+7},\SO_{10},(2n-3,1^{10}),T(HSpin_{10}))$ \\
        \hline
        23 & $(\Sp_8,\SL_2\times \SL_2,(3^2 1^2),T(std_{\SL_2}))$ & W & PV & $(\SO_{9},\SO_{9},1,T(\Spin_{9}))$ \\
        \hline
        24 & $(\Sp_{2n+2m},\Sp_{2n},(2m,1^{2n}),std)$ & W & W & $(\SO_{2n+2m+1},\SO_{2m+2},(2n-1,1^{2m+2}),0)$ \\
        \hline
        25 & $(\Sp_{2n+6},\Sp_6,(2n,1^6),\wedge^3)$ & W & W & $(\SO_{2n+7},\Sp_2\times \SO_{2n+3}, (2^21^{2n+3}),std_{\Sp_2})$ \\
        \hline
	\end{tabular}
	\captionof{table}{Type $C_n$ for $n>2$}
    \label{Table C}
\end{figure}

\begin{figure}[h!]
	\begin{tabular}{| c | c | c | c | c |}
		\hline
		\textnumero & $\Delta=(G,H,\iota,\rho_H)$ & \multicolumn{2}{c|}{Type} & $\hat{\Delta}=(\hat{G},\hat{H}',\hat{\iota}',\rho_{\hat{H}'})$  \\
		\hline
		1 & $(\SO_{2n},\SO_{2n},1,0)$ & PV & W & $(\SO_{2n},1,(2n-1, 1),0)$ \\
		\hline
        2 & $(\SO_{2n},\SO_{2n},1,T(std))$ & PV & W & $(\SO_{2n},\SO_3,(2n-3,1^{3}),T(\Spin_3))$ \\
        \hline
        3 & $(\SO_{10},\SO_{10},1,T(HSpin_{10}))$ & PV & W & $(\SO_{10},\SL_2,(4^2,1^2),0)$ \\
        \hline
        4 & $(\SO_{12},\SO_{12},1,HSpin_{12})$ & V & W & $(\SO_{12},\SL_2,(6^2),0)$ \\
        \hline
        5 & $(\SO_{12},\SO_{12},1,HSpin_{12}^{+}\oplus HSpin_{12}^{-})$ & V & ? & $(\SO_{12},\Sp_4\times \SO_4,(2^41^4),0)$ \\
        \hline
        6 & $(\SO_{12},\SO_{12},1,HSpin_{12}\oplus T(std))$ & V & ? & $(\SO_{12},\SL_2\times \SO_4,(4^21^4),T(HSpin_4))$ \\
        \hline
        7 & $(\SO_{8},\SO_{8},1,T(std\oplus HSpin_8))$ & PV & W & $(\SO_8,\SL_2\times \SO_4,(2^2,1^4),T(HSpin_{4}^{+}\oplus HSpin_{4}^{-}))$ \\
        \hline
        8 & $(\SO_8,G_2,1,0)$ & P & W & $(\SO_8,\SL_2\times \SO_4,(2^2,1^4),0)$ \\
        \hline
        9 & $(\SO_{10},\Spin_7\times \GL_1,1,0)$ & P & W & $(\SO_{10},\SL_2\times \SO_6,(2^21^6),T(HSpin_6))$ \\
        \hline
        10 & $(\SO_{4n},\GL_{2n},1,0)$ & P & W & $(\SO_{4n},\Sp_{2n},(2^{2n}),T(std))$ \\
        \hline
\end{tabular}
\end{figure}

\newpage

\begin{figure}[h!]\leftskip-2cm
	\begin{tabular}{| c | c | c | c | c |}
		\hline
		\textnumero & $\Delta=(G,H,\iota,\rho_H)$ & \multicolumn{2}{c|}{Type} & $\hat{\Delta}=(\hat{G},\hat{H}',\hat{\iota}',\rho_{\hat{H}'})$  \\
		\hline
        11 & $(\SO_{4n+2},\GL_{2n+1},1,0)$ & P & W & $(\SO_{4n+2},\Sp_{2n}\times \SO_2,(2^{2n}1^2),0)$ \\
        \hline
        12 & $(\SO_{2n},\GL_{n},1,T(std))$ & P & P & self-dual \\
        \hline
        13 & $(\SO_{2n},\SO_{2n-1},1,0)$ & P & W & $(\SO_{2n},\SO_3,(2n-3,1^3),0)$ \\
        \hline
        14 & $(\SO_{2n},\SO_{2n-2}\times \SO_2,1,0)$ & P & W & $(\SO_{2n},\SO_5,(2n-5,1^5),T(\Spin_5))$ \\
        \hline
        15 & $(\SO_{14},\SO_{13},1,\Spin_{13})$ & ? & $\ast$ & $(\SO_{14},\Sp_4\times \SO_6,(2^41^6),0)$ \\
        \hline
        16 & $(\SO_{12},\SO_{11},1,\Spin_{11})$ & ? & $\ast$ & $(\SO_{12},\SL_2\times \SO_4,(4^21^4),0)$ \\
        \hline
        17 & $(\SO_{8},\SO_{7},1,T(\Spin_{7}))$ & P & W & $(\SO_8,\Sp_2\times \SO_4,(2^21^4),T(std_{\Sp_2}))$ \\
        \hline
        18 & $(\SO_{14},\SO_{12}\times \SO_2,1,HSpin_{12})$ & ? & $\ast$ & $(\SO_{14},\SL_2\times \SO_6,(4^21^6),T(HSpin_6))$ \\
        \hline
        19 & $(\SO_{10},\SO_{8}\times \SO_2,1,T(HSpin_{8}))$ & P & W & $(\SO_{10},\SO_6\times \Sp_2,(2^21^6),T(HSpin_6\oplus std_{\Sp_2}))$ \\
        \hline
        20 & $(\SO_{2n},1,(2n-1, 1),0)$ & W & PV & $(\SO_{2n},\SO_{2n},1,0)$ \\
        \hline
        21 & $(\SO_{4n+2},\Sp_{2n}\times \SO_2,(2^{2n}1^2),0)$ & W & P & $(\SO_{4n+2},\GL_{2n+1},1,0)$ \\
        \hline
        22 & $(\SO_{2n+4},\Sp_2\times \SO_{2n},(2^21^{2n}),0)$ & W & W & $(\SO_{2n+4},G_2,(2k-3,1^7),0)$ \\
        \hline
        23 & $(\SO_{2n+4},\Sp_2\times \SO_{2n},(2^21^{2n}),T(std_{\Sp_2}))$ & W & W & $(\SO_{2n+4},\Spin_7,(2k-3,1^7),T(\Spin_7))$ \\
        \hline
        24 & $(\SO_8,\SL_2\times \SO_4,(2^2,1^4),T(HSpin_{4}^{+}\oplus HSpin_{4}^{-}))$ & W & PV & $(\SO_{8},\SO_{8},1,T(std\oplus HSpin_8))$ \\
        \hline
        25 & $(\SO_{10},\SL_2\times \SO_6,(2^21^6),T(HSpin_6))$ & W & P & $(\SO_{10},\Spin_7\times \GL_1,1,0)$ \\
        \hline
        26 & $(\SO_{10},\SO_6\times \Sp_2,(2^21^6),T(HSpin_6\oplus std_{\Sp_2}))$ & W & P & $(\SO_{10},\SO_{8}\times \SO_2,1,T(HSpin_{8}))$ \\
        \hline
        27 & $(\SO_{12},\SL_2\times \SO_8,(2^21^8),T(HSpin_8))$ & W & ? & $(\SO_{12},\SL_2\times \Spin_7,(2^21^8),T(std_{\SL_2}))$ \\
        \hline
        28 & $(\SO_{12},\SL_2\times \SO_8,(2^21^8),T(HSpin_8\oplus std_{\SL_2}))$ & W & W & self-dual \\
        \hline
        29 & $(\SO_{14},\SL_2\times \SO_{10},(2^21^{10}),T(HSpin_{10}))$ & W & W & self-dual \\
        \hline
        30 & $(\SO_{16},\SL_2\times \SO_{12},(2^21^{12}),HSpin_{12})$ & W & ? & $(\SO_{16},\Sp_2\times \Spin_7,(4^21^4),0)$ \\
        \hline
        31 & $(\SO_{16},\SL_2\times \SO_{12},(2^21^{12}),T(HSpin_{12}\oplus std_{\SL_2}))$ & W & ? & $(\SO_{16},\Sp_2\times \SO_8,(4^21^4),T(HSpin_8))$ \\
        \hline
        32 & $(\SO_{12},\SL_2\times \Spin_7,(2^21^8),T(std_{\SL_2}))$ & ? & W & $(\SO_{12},\SL_2\times \SO_8,(2^21^8),T(HSpin_8))$ \\
        \hline
        33 & $(\SO_{12},\SL_2\times \Spin_7,(2^21^8),0)$ & $\ast$ & $\ast$ & self-dual \\
        \hline
        34 & $(\SO_{12},\Sp_4\times \SO_4,(2^41^4),0)$ & ? & V & $(\SO_{12},\SO_{12},1,HSpin_{12}^{+}\oplus HSpin_{12}^{-})$ \\
        \hline
        35 & $(\SO_{2n+8},\Sp_4\times \SO_{2n},(2^41^{2n}),0),\;n>2$ & $\ast$ & ? & $(\SO_{2n+8},\SO_{13},(2n-5,1^{13}),\Spin_{13})$ \\
        \hline
        36 & $(\SO_{4n},\Sp_{2n},(2^{2n}),0)$ & W & W & self-dual \\
        \hline
        37 & $(\SO_{4n},\Sp_{2n},(2^{2n}),T(std))$ & W & P & $(\SO_{4n},\GL_{2n},1,0)$ \\
        \hline
        38 & $(\SO_{12},\SL_2,(6^2),0)$ & W & V & $(\SO_{12},\SO_{12},1,HSpin_{12})$ \\
        \hline
        39 & $(\SO_{10},\SL_2,(4^2,1^2),0)$ & W & PV & $(\SO_{10},\SO_{10},1,T(HSpin_{10}))$ \\
        \hline
        40 & $(\SO_{2n+8},\SL_2\times \SO_{2n},(4^2,1^{2n}),0),\;n>1$ & $\ast$ & ? & $(\SO_{2n+8},\SO_{11}, (2n-3,1^{11}),\Spin_{11})$ \\
        \hline
41 & $(\SO_{12},\SL_2\times \SO_4,(4^21^4),T(HSpin_4))$ & ? & V & $(\SO_{12},\SO_{12},1,HSpin_{12}\oplus T(std))$\\
        \hline
        42 & $(\SO_{14},\SL_2\times \SO_6,(4^21^6),T(HSpin_6))$ & $\ast$ & ? & $(\SO_{14},\SO_{12}\times \SO_2,1,HSpin_{12})$ \\
        \hline
        43 & $(\SO_{16},\SL_2\times \SO_8,(4^21^8),T(HSpin_8))$ & ? & W & $(\SO_{16},\SL_2\times \SO_{12},(2^21^{12}),T(HSpin_{12}\oplus std_{\SL_2}))$ \\
        \hline
        44 & $(\SO_{20},\SL_2\times \SO_{12},(4^21^{12}),HSpin_{12})$ & $\ast$ & $\ast$ & self-dual \\
        \hline
        45 & $(\SO_{16},\Sp_2\times \Spin_7,(4^21^8),0)$ & ? & W & $(\SO_{16},\SL_2\times \SO_{12},(2^21^{12}),HSpin_{12})$ \\
        \hline
        46 & $(\SO_{2n+2k},\SO_{2n-1},(2k+1,1^{2n-1}),0)$ & W & W & $(\SO_{2n+2k},\SO_{2k+3},(2n-3,1^{2k+3}),0)$ \\
        \hline
        47 & $(\SO_{2n+12},\SO_{11},(2n+1,1^{11}),\Spin_{11})$ & ? & $\ast$ & $(\SO_{2n+12},\SL_2\times \SO_{2n+4},(4^2,1^{2n+4}),0)$ \\
        \hline
        48 & $(\SO_{2n+14},\SO_{13},(2n+1,1^{13}),\Spin_{13})$ & ? & $\ast$ & $(\SO_{2n+14},\SO_{2n+6}\times \Sp_4,(2^4,1^{2n+6}),0)$ \\
        \hline
        49 & $(\SO_{2n+8},\SO_{7},(2n+1,1^{7}),T(\Spin_{7}))$ & W & W & $(\SO_{2n+8},\SL_2\times \SO_{2n+4},(2^2,1^{2n+4}),T(std_{\Sp_2}))$ \\
        \hline
        50 & $(\SO_{2n+6},\SO_{5},(2n+1,1^{5}),T(\Spin_{5}))$ & W & P & $(\SO_{2n+6},\SO_2\times \SO_{2n+4},1,0)$ \\
        \hline
        51 & $(\SO_{2n+4},\SO_{3},(2n+1,1^{3}),T(\Spin_{3}))$ & W & PV & $(\SO_{2n+4},\SO_{2n+4},1,T(std))$ \\
        \hline
        52 & $(\SO_{2n+8},G_2,(2n+1,1^7),0)$ & W & W & $(\SO_{2n+8},\SL_2\times \SO_{2n+4},(2^21^{2n+4}),0)$ \\
        \hline
	\end{tabular}
	\captionof{table}{Type $D_n$, $n>3$}
    \label{Table D}
\end{figure}

\clearpage

To end this section, we discuss the cases with Type $\ast$ in Table \ref{Table D} above. For those cases, we need to use some other argument to compute the dual of $\Delta_{red}$ and then use Conjecture \ref{Whittaker induction} to compute the dual of $\Delta$. We start with the quadruple 
$$\Delta_{red}=(\GL_4\times \SO_{2n},\Sp_4\times \SO_{2n}, 1,std\otimes std),\;n\geq 3.$$
This is neither a polarized nor vector space case, but we can compute its dual using the period integral conjecture. In fact, the period integral associated with $\Delta_{red}$ is a combination of the theta correspondence between $\Sp_4\times \SO_{2n}$ and the Gan-Gross-Prasad period $\SO_6\times \SO_5/\SO_5$ (which is just $\GL_4\times \Sp_4/\Sp_4$ up to isogeny). Then from the period integral conjecture point of view, the dual should be given by 
$$\widehat{(\Delta_{red})}=(\GL_4\times \SO_{2n},\GL_4\times \SO_5, (2n-5,1^5),\wedge_{\GL_4}^2\otimes \Spin_5).$$
Combining this duality with Conjecture \ref{Whittaker induction}, we can compute the dual for Type $\ast$ in Model 15, Model 35 and Model 48.

Next, we consider the quadruple 
$$\Delta_{red}=(\GL_2\times \GL_2\times \SO_{2n},\SL_2\times \SO_{2n}, 1,std\otimes std),\;n\geq 2.$$
We can still use the period integral conjecture to compute its dual. In this case, the period integral associated with $\Delta_{red}$ is a combination of the theta correspondence between $\Sp_2\times \SO_{2n}$ and the trilinear period $\GL_{2}^{3}/\GL_2$. Then from the period integral conjecture point of view, the dual should be given by 
$$\widehat{(\Delta_{red})}=(\GL_2\times \GL_2\times \SO_{2n},\GL_2\times \GL_2\times \SO_{3}, (2n-3,1^3),std\otimes std\otimes \Spin_3).$$
Combining this duality with Conjecture \ref{Whittaker induction}, we can compute the dual for Type $\ast$ in Model 16, Model 40, and Model 47. The arguments for the above two models are similar to those in Tables 21, 23, 25 of \cite{MWZ2} and we will skip the details here.

Next, we consider the quadruple 
$$\Delta_{red}=(\GL_2\times \GL_2\times \SO_{6},\SL_2\times \SO_{6}, 1,std\otimes std\oplus T(\HSpin_6)).$$
We claim that it is dual to 
$$\widehat{(\Delta_{red})}=(\GL_2\times \GL_2\times \SO_{6}, \GL_2\times \GL_2\times \SO_{4}\times \SO_2,1,std_{\GL_2}\otimes std_{\GL_2}\otimes \HSpin_4).$$
To justify this, we study the period integral associated with $\widehat{(\Delta_{red})}$. It is easy to see that this is a combination of the theta correspondence of $\SO_4\times \Sp_2$ (this corresponds to the integral over $\GL_2\times \GL_2$ against the theta series associated with the symplectic representation $std_{\GL_2}\otimes std_{\GL_2}\otimes \HSpin_4$) and the period integral for the spherical variety $\GL_4\times \GL_2/\GL_2\times \GL_2$ (this corresponds to the integral over $\SO_4\times \SO_2$) studied in \cite{WZ}. Then from the period integral conjecture point of view (with the help of the unramified computation in \cite{WZ} and the theta correspondence), the dual of $\widehat{(\Delta_{red})}$ should be $\Delta_{red}$. Combining with Conjecture \ref{Whittaker induction}, we can compute the dual for Type $\ast$ in Model 18 and Model 42.

Next, we consider the quadruple 
$$\Delta_{red}=(\GL_2\times \SO_{8},\SL_2\times \Spin_7, 1,std\otimes \Spin_7).$$
We claim that it is dual to
$$\widehat{(\Delta_{red})}=(\GL_2\times \SO_{8}, \GL_2\times \SO_4,(2^21^4),0).$$
To see this, we note that 
$$(\widehat{(\Delta_{red})})_{red}=(\GL_2\times \GL_2\times \SO_4, \GL_2\times \SO_4, 1,std\otimes std ).$$
The period integral associated with $(\widehat{(\Delta_{red})})_{red}$ is a combination of the theta correspondence of $\SO_4\times \Sp_2$ and the trilinear period $\GL_{2}^{3}/\GL_2$. Then from the period integral conjecture point of view, its dual should be given by 
$$(\GL_2\times \GL_2\times \SO_4, \GL_2\times \GL_2\times \SO_3, 1, std\otimes std \otimes \Spin_3).$$
Combining with Conjecture \ref{Whittaker induction}, we know that $\Delta_{red}$ should be dual to $\widehat{(\Delta_{red})}$. Combining  duality $\Delta_{red}\leftrightarrow \widehat{(\Delta_{red})}$ with Conjecture \ref{Whittaker induction}, we can compute the dual for Type $\ast$ in Model 33.

Lastly, we consider the quadruple 
$$\Delta_{red}=(\GL_2\times \GL_2\times \SO_{12},\SL_2\times \SO_{12}, 1,std\otimes std\oplus \HSpin_{12}).$$
We claim that it is dual to
$$\widehat{(\Delta_{red})}=(\GL_2\times \GL_2\times \SO_{12},\GL_2\times \GL_2\times \SL_2\times \SO_4 , (4^21^4),std_{\GL_2,1}\otimes std_{\GL_2,2}\otimes \HSpin_4).$$
To see this, note that $(\widehat{(\Delta_{red})})_{red}$ is given by 
$$(\GL_2\times \GL_2\times \GL_2\times \GL_2\times \SO_4,\GL_2\times \GL_2\times \SL_2\times \SO_4,1,std_{\GL_2,1}\otimes std_{\GL_2,2}\otimes \HSpin_4\oplus std_{\SL_2}\otimes std_{\SO_4}),$$ where $\SL_2$ is diagonally embedded in the 3rd and 4th copies of $\GL_2$.

The period integral associated with $(\widehat{(\Delta_{red})})_{red}$ is a combination of two theta correspondences of $\SO_4\times \Sp_2$ (this corresponds to the integral over $\GL_2\times \GL_2$ and over one $\SL_2$-copy of $\SO_4$ where the restriction of $\HSpin_4$ to it is trivial) and two trilinear periods $\GL_{2}^{3}/\GL_2$ (this corresponds to the integral over $\SL_2$ and the other $\SL_2$-copy of $\SO_4$). Hence from the period integral conjecture point of view, its dual should be given by ($\SL_2$-diagonally embeds into the 1st and 2nd copy of $\GL_2$) 
$$
(\GL_2\times \GL_2\times \GL_2\times \GL_2\times \SO_4, \SL_2\times \GL_2\times \GL_2\times \SO_4, 1, std_{\SL_2}\otimes std_{\SO_4}\oplus std_{\GL_2,1}\otimes std_{\GL_2,2}\otimes \HSpin_4).
$$
Combining this duality with Conjecture \ref{Whittaker induction}, we know that $\Delta_{red}$ should be dual to $\widehat{(\Delta_{red})}$. Combining the duality $\Delta_{red}\leftrightarrow \widehat{(\Delta_{red})}$ with Conjecture \ref{Whittaker induction}, we can compute the dual for Type $\ast$ in Model 44.

\end{document}